\newtheorem{theorem}{Theorem}[section]
\newtheorem{definition}[theorem]{Definition}
\newtheorem{lemma}[theorem]{Lemma}
\newtheorem{proposition}[theorem]{Proposition}
\newtheorem{corollary}[theorem]{Corollary}
\newtheorem{remark}[theorem]{Remark}
\newtheorem{example}[theorem]{Example}
\newcommand{\hh}{{\mathbb{H}}}
\newcommand{\s}{{\mathbb{S}}}
\newcommand{\cc}{{\mathbb{C}}}
\newcommand{\rr}{{\mathbb{R}}}
\newcommand{\nn}{{\mathbb{N}}}
\title{\bf A new series expansion for slice regular functions}
\author{Caterina Stoppato\footnote{Partially supported by GNSAGA of the INdAM and by FIRB ``Geometria Differenziale Complessa e Dinamica Olomorfa''.} \\ 
\normalsize Universit\`a degli Studi di Milano\\
\normalsize Dipartimento di Matematica ``F. Enriques''\\
\normalsize Via Saldini 50, 20133 Milano, Italy\\  
\normalsize caterina.stoppato@unimi.it}
\date{  }
\begin{document}


\maketitle

\begin{abstract}
\noindent A promising theory of quaternion-valued functions of one quaternionic variable, now called \emph{slice regular} functions, has been introduced in \cite{cras,advances}. The basic examples of slice regular functions are the power series of type $\sum_{n \in \mathbb{N}} q^n a_n$ on their balls of convergence $B(0,R) = \{q \in \mathbb{H} : |q|<R\}$. Conversely, if $f$ is a slice regular function on a domain $\Omega \subseteq \mathbb{H}$ then it admits at each point $q_0 \in \Omega$ an expansion of type $f(q) = \sum_{n \in \mathbb{N}} (q-q_0)^{*n} a_n$ where $(q-q_0)^{*n}$ denotes the $n$th power of $q-q_0$ with respect to an appropriately defined multiplication $*$. However, the information provided by such an expansion is somewhat limited by a fact: if $q_0$ does not lie on the real axis then the set of convergence of the series in the previous equation needs not be a Euclidean neighborhood of $q_0$. We are now able to construct a new type of expansion that is not affected by this phenomenon: an expansion into series of polynomials valid in open subsets of the domain. Along with this construction, we present applications to the computation of the multiplicities of zeros and of partial derivatives.
\end{abstract}

\vfill

\section{Introduction}\label{sectionintroduction}

Let $\hh$ denote the real algebra of quaternions, that is the vector space $\rr^4$ endowed with the multiplication constructed as follows: if $1,i,j,k$ denotes the standard basis, define $$i^2 = j^2 = k^2 = -1,$$  $$ ij = -ji = k, jk = -kj = i, ki = -ik = j,$$ let $1$ be the neutral element and extend the operation by distributivity and linearity to all quaternions $q = x_0 + x_1 i + x_2 j + x_3 k$.  The \emph{conjugate} of such a $q$ is defined as $\bar q = x_0 - x_1 i - x_2 j - x_3 k$, its \emph{real} and \emph{imaginary part} as $Re(q) = x_0$ and $Im(q) = x_1 i + x_2 j + x_3 k$, and its \emph{modulus} as $|q|= \sqrt{q\bar q} = \sqrt{Re(q)^2 + |Im(q)|^2}$. The multiplicative inverse of each $q \neq 0$ is computed as 
$$q^{-1} = \frac{\bar q}{|q|^2}.$$

Much literature has been devoted to the possibility of defining for quaternionic functions a notion of regularity playing the same role as holomorphy for complex functions. This is by no means an elementary question, as neatly explained in \cite{sudbery}.  First of all, let us consider the following fact.

\begin{theorem}\label{derivative}
A function $f:\hh \to \hh$ is left $q$-differentiable at every $q \in \hh$, i.e.
$$\lim_{h \to 0} h^{-1}[f(q+h)-f(q)]$$ 
exists at every $q \in \hh$, if and only if there exist $a,b \in \hh$ such that $f(q) = qa+b$ for all $q \in \hh$.
\end{theorem}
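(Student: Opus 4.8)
The two implications are of quite different character, so I would treat them separately. The ``if'' direction is a direct computation: if $f(q)=qa+b$, then $f(q+h)-f(q)=ha$ for all $h$, hence $h^{-1}[f(q+h)-f(q)]=a$ for every $h\neq 0$, and the limit exists and equals $a$ at every point. The substance of the statement is the converse, and the first move is to translate the pointwise quaternionic limit into a statement about the real differential of $f$. Writing $f'(q)$ for the limit, its very existence gives $f(q+h)-f(q)=h\,f'(q)+o(|h|)$ as $h\to 0$, so $f$ is differentiable at $q$ as a map $\rr^4\to\rr^4$ and its differential is left multiplication by $f'(q)$. Evaluating this differential on the coordinate directions $1,i,j,k$, with $q=x_0+x_1 i+x_2 j+x_3 k$, yields the Cauchy--Riemann-type relations
\begin{equation*}
f'(q)=\frac{\partial f}{\partial x_0}=-i\,\frac{\partial f}{\partial x_1}=-j\,\frac{\partial f}{\partial x_2}=-k\,\frac{\partial f}{\partial x_3},
\end{equation*}
equivalently $\partial f/\partial x_1=i\,\partial f/\partial x_0$, $\partial f/\partial x_2=j\,\partial f/\partial x_0$ and $\partial f/\partial x_3=k\,\partial f/\partial x_0$, valid at every point of $\hh$.

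The core of the argument is then to show that these relations force the derivative $g:=\partial f/\partial x_0=f'$ to be constant. Setting aside regularity for a moment, I would differentiate the relations and exploit the equality of mixed second partials: computing $\partial^2 f/\partial x_1\partial x_2$ in the two possible orders produces $ij\,\partial^2 f/\partial x_0^2$ in one order and $ji\,\partial^2 f/\partial x_0^2$ in the other; since $ij=-ji=k$ and the two are equal by Schwarz's theorem, this forces $\partial^2 f/\partial x_0^2=0$. Feeding this back into the relations gives $\partial g/\partial x_\mu=0$ for $\mu=0,1,2,3$, so $g\equiv a$ for a constant $a\in\hh$. All four first partials of $f$ are then constant, namely $\partial f/\partial x_0=a$ and $\partial f/\partial x_1=ia$, $\partial f/\partial x_2=ja$, $\partial f/\partial x_3=ka$, and integrating yields $f(q)=x_0 a+x_1 ia+x_2 ja+x_3 ka+b=qa+b$ with $b=f(0)$, as desired.

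The one genuine obstacle is that the hypothesis grants only pointwise differentiability, whereas the mixed-partials step uses Schwarz's theorem and hence enough smoothness. To legitimize it I would first upgrade the regularity. The cleanest route is to read the three relations as an overdetermined first-order system $\mathcal{D}f=0$, whose rows are the operators $\partial/\partial x_\mu-e_\mu\,\partial/\partial x_0$ for $e_1=i,e_2=j,e_3=k$: its principal symbol at a covector $\xi\neq 0$ acts on $\hh$ by left multiplication by the quaternions $\xi_\mu-e_\mu\xi_0$, each of which is invertible whenever $(\xi_0,\xi_\mu)\neq(0,0)$, so the symbol is injective and the system is elliptic. Interior elliptic regularity then promotes our pointwise-differentiable (hence weak) solution to a $C^\infty$ function, after which the computation above applies verbatim. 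Geometrically, the same conclusion is corroborated by restricting $f$ to each affine plane $\rr+\rr\iota$ ($\iota$ a unit imaginary quaternion): there the corresponding relation becomes the ordinary Cauchy--Riemann equation for a real-differentiable map, so $f$ is holomorphic, and thus real-analytic, along every such plane, which already supplies the continuity of second derivatives that Schwarz's theorem requires.
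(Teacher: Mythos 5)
Your overall skeleton is the right one — and, for calibration, the paper itself does not prove this theorem at all: it simply cites Sudbery's \emph{Quaternionic analysis} for it. Your ``if'' direction is fine, and in the converse the first and last steps are correct and standard: existence of the limit gives real differentiability with $df_q(h)=h\,f'(q)$ (note this is \emph{right}, not left, multiplication by $f'(q)$ — a harmless slip, since your relations $\partial f/\partial x_\mu=e_\mu\,\partial f/\partial x_0$ are the correct ones), and once $f$ is known to be $C^2$ the mixed-partials computation $ij\,\partial_0^2f=ji\,\partial_0^2f\Rightarrow\partial_0^2f=0$ and the integration do finish the proof. The problem is precisely the step you flag as ``the one genuine obstacle'': neither of your two proposed regularity upgrades actually works as stated.

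Route 1 fails at the parenthetical ``(hence weak)''. A function that is differentiable at every point and satisfies the system pointwise is \emph{not} thereby a distributional solution: to integrate by parts against a test function you need the pointwise partial derivatives to be locally integrable and to agree with the distributional ones, and everywhere-differentiability guarantees neither (already in one variable, $x^2\sin(1/x^2)$ has an everywhere-existing but non-locally-integrable derivative). This is exactly the subtlety that makes Goursat/Looman–Menchoff-type theorems nontrivial; it could be patched — e.g.\ first prove $f'$ is locally bounded via Cauchy estimates on slices, so that $f$ is locally Lipschitz and genuinely a weak solution — but that argument is missing. Route 2 fails at the claim that holomorphy along the planes $\rr+\rr\iota$ ``already supplies the continuity of second derivatives that Schwarz's theorem requires.'' Those planes all contain the real axis, so through a point $q_0\notin\rr$ exactly one of them passes; analyticity along that single $2$-plane gives no control whatsoever on the mixed derivatives $\partial_1\partial_2 f$, $\partial_2\partial_1 f$ in the $i$- and $j$-directions, which are transversal to it (the paper's own example of a discontinuous slice regular function illustrates how little slice-wise regularity says about transversal behavior; you cannot even assert these second partials \emph{exist}). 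The correct completion — which is Sudbery's actual proof, i.e.\ the proof the paper points to — is to run your Goursat argument not on the planes through $\rr$ but on all translates of the two coordinate complex planes: writing $q=z_1+z_2j$ with $z_1=x_0+ix_1$, $z_2=x_2+ix_3$ and $f=v+wj$, the relations $\partial_1 f=i\,\partial_0 f$ and $\partial_3 f=k\,\partial_0 f=i\,\partial_2 f$ say that $v,w$ are separately holomorphic in $z_1$ (each fixed $z_2$) and in $z_2$ (each fixed $z_1$); Hartogs' theorem on separate holomorphy — which requires no continuity hypothesis — then yields joint holomorphy, hence $f\in C^\infty(\hh)$, after which your computation applies verbatim.
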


A proof can be found in \cite{sudbery}. The class of functions encompassed does not grow significantly if we vary the domain of definition or consider the analogous notion of right $q$-differentiability. The next corollary can be easily derived.

\begin{corollary}
The class of functions $f:\hh \to \hh$ admitting at every $q_0 \in \hh$ a power series expansion 
$$f(q) = \sum_{n \in \nn} (q-q_0)^n a_n$$
with $\{a_n\}_{n \in \nn}$ coincides with the class of affine functions $f(q) = qa+b$ with $a,b \in \hh$.
\end{corollary}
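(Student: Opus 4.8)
The plan is to show the two-way inclusion between the class of functions admitting a power series expansion at every point and the class of affine functions $f(q)=qa+b$. One inclusion is immediate: every affine function $f(q)=qa+b$ admits at each $q_0 \in \hh$ the (finite) expansion $f(q) = (q-q_0)a + (q_0 a + b)$, which is trivially a convergent power series of the required type with $a_0 = q_0 a + b$, $a_1 = a$, and $a_n = 0$ for $n \geq 2$. So the real content is the reverse inclusion: if $f$ admits such an expansion at every point, then $f$ must be affine.

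For the reverse inclusion I would reduce to Theorem \ref{derivative} by showing that the existence of a power series expansion at a point forces left $q$-differentiability there. Fix $q_0 \in \hh$ and suppose $f(q) = \sum_{n \in \nn} (q-q_0)^n a_n$ converges on some neighborhood of $q_0$. Writing $h = q - q_0$, I want to compute $\lim_{h \to 0} h^{-1}[f(q_0+h) - f(q_0)]$. Since $f(q_0+h) - f(q_0) = \sum_{n \geq 1} h^n a_n$, I would factor out one copy of $h$ on the left to get $h^{-1}[f(q_0+h)-f(q_0)] = \sum_{n \geq 1} h^{n-1} a_n = a_1 + \sum_{n \geq 2} h^{n-1} a_n$, and then argue that the tail tends to $0$ as $h \to 0$. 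This shows the left $q$-derivative exists at $q_0$ and equals $a_1$. As $q_0$ was arbitrary, $f$ is left $q$-differentiable at every point of $\hh$, and Theorem \ref{derivative} then yields $a, b \in \hh$ with $f(q) = qa+b$.

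The step I expect to require the most care is justifying the termwise manipulation: factoring $h$ out on the \emph{left} and then taking the limit of the remaining series. Because $\hh$ is noncommutative, one must be scrupulous about writing $h^n a_n = h \cdot (h^{n-1} a_n)$ with the factor $h$ appearing on the left, which is exactly what the left difference quotient $h^{-1}[\cdots]$ requires — no reordering of $h$ past the coefficients $a_n$ is needed, so the noncommutativity is actually harmless here once the bookkeeping is set up correctly. The analytic point, bounding the tail $\sum_{n \geq 2} h^{n-1} a_n$, follows from the convergence of the series on a ball $B(q_0, R)$: for $|h|$ small the terms are dominated by a convergent geometric-type majorant in $|h|$, so the tail is $O(|h|)$ and vanishes in the limit. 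With these two observations in place — triviality of the forward inclusion and the derivative computation for the reverse — the corollary follows directly from Theorem \ref{derivative}.
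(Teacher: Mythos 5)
Your proposal is correct and matches the paper's intended argument: the paper states that the corollary ``can be easily derived'' from Theorem \ref{derivative}, and your reduction --- affine functions trivially expand at every point, while a convergent expansion $\sum_{n \in \nn} (q-q_0)^n a_n$ at $q_0$ gives $h^{-1}[f(q_0+h)-f(q_0)] = a_1 + \sum_{n \geq 2} h^{n-1}a_n \to a_1$ since all factors of $h$ sit on the left, hence left $q$-differentiability everywhere --- is exactly that derivation. Your tail estimate via $|h^{n-1}a_n| = |h|^{n-1}|a_n|$ (multiplicativity of the quaternionic modulus) is the right justification, so nothing is missing.
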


Once again, the situation changes but little if we consider an expansion of type $f(q) = \sum_{n \in \nn} a_n (q-q_0)^n$ instead. It should be noticed, however, that $(q-q_0)^n a_n$ and $a_n (q-q_0)^n$ are not the only possible terms of degree $n$ in $q-q_0$: in fact, in an (associative) non-commutative framework, the generic form of such a term is
\begin{equation}\label{skewmonomial}
\alpha_0 (q-q_0) \alpha_1 (q-q_0) \ldots \alpha_{n-1} (q-q_0)\alpha_n.
\end{equation}
We may try to define a notion of analyticity based on series of terms of this type, but we are immediately discouraged by the following fact.

\begin{proposition}
The class of functions $f:\hh \to \hh$ admitting at every $q_0 \in \hh$ an expansion into series of monomials of type \eqref{skewmonomial} coincides with the class of functions $f:\hh \to \hh$ that are analytic in the four real variables $x_0,\ldots,x_3$.
\end{proposition}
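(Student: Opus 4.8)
The plan is to prove both inclusions. The easy direction is showing that any real-analytic function $f:\hh\to\hh$ admits an expansion into series of monomials of type \eqref{skewmonomial}. The reverse direction, that any function admitting such an expansion at every point is real-analytic, is essentially immediate from the fact that the monomials are themselves real-analytic and that local uniform convergence of such series preserves real-analyticity; the substantive content lies in the first inclusion, so that is where I would concentrate.

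For the main inclusion, the guiding idea is that the monomials \eqref{skewmonomial} are flexible enough to reproduce every monomial in the real coordinates $x_0,\dots,x_3$. First I would observe that it suffices to treat the expansion at $q_0=0$, since translating $q\mapsto q-q_0$ carries real-analyticity to real-analyticity and sends monomials of type \eqref{skewmonomial} based at $q_0$ to those based at $0$. Next, I would extract the four real coordinates $x_0,\dots,x_3$ of $q$ as $\hh$-linear combinations of the left and right multiplications by $q$ and by the imaginary units: concretely, using the conjugation formula $\bar q = -\tfrac12(q + iqi + jqj + kqk)$, one recovers $x_0 = \tfrac12(q+\bar q)$ and each $x_\ell$ as a fixed real-linear expression built from terms of the form $\alpha q \beta$ with constant quaternions $\alpha,\beta$. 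In other words, each individual coordinate function $x_\ell$ is itself a finite sum of monomials of type \eqref{skewmonomial} of degree one.

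The key step is then to promote this to arbitrary monomials. Since each $x_\ell$ is a finite $\hh$-linear combination of expressions $\alpha q \beta$, any product $x_{\ell_1}\cdots x_{\ell_n}$ expands, by distributivity, into a finite sum of monomials of type \eqref{skewmonomial} of degree $n$. Therefore every real-valued monomial in $x_0,\dots,x_3$, and hence (multiplying on the right by a constant $a\in\hh$) every $\hh$-valued monomial, is a finite sum of terms \eqref{skewmonomial}. Given a real-analytic $f$ with its convergent power series $\sum c_m x^m$ (multi-index notation) near $q_0$, I would substitute these expressions and regroup by total degree to obtain a series of the required form. The convergence of the rearranged series on a suitable neighborhood follows from the absolute convergence of the original real power series together with the elementary bound $|\alpha q\beta|\le|\alpha|\,|\beta|\,|q|$, which controls the norms of the monomials and lets one dominate the regrouped series term by term.

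The main obstacle I anticipate is the convergence bookkeeping in this last step: one must verify that collecting the contributions of fixed total degree $n$ yields a series that converges absolutely and locally uniformly, so that the regrouping is legitimate and the limit is genuinely of the stated form. This is where I expect the real work to lie, since the number of monomials \eqref{skewmonomial} of degree $n$ produced from a single coordinate monomial grows, and one needs a geometric estimate on the radius of convergence that survives the expansion. Once a uniform bound of the form $\sum_n (\text{number of degree-}n\text{ terms})\cdot C^n \rho^n<\infty$ is established on a small enough ball, both inclusions combine to give the stated equality of function classes.
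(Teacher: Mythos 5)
Your proposal is correct and takes essentially the same approach as the paper: the paper's proof consists precisely of the observation that the real coordinates can be recovered as $x_0=\frac{1}{4}(q-iqi-jqj-kqk)$, $x_1=\frac{1}{4i}(q-iqi+jqj+kqk)$, etc.\ (equivalent to your conjugation identity), so that each $x_\ell$ is a finite sum of degree-one monomials of type \eqref{skewmonomial}, after which one substitutes into the real power series and regroups. The convergence bookkeeping you single out as the remaining work is left entirely implicit in the paper, which states only the coordinate identities.
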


The proof of the preceding proposition relies upon the fact that if $q=x_0+i x_1+j x_2+ k x_3$, then
\begin{eqnarray}
x_0=\frac{1}{4}(q-i qi-j qj-kqk),\ x_1=\frac{1}{4i}(q-i qi+j qj+kqk), \nonumber \\ 
x_2=\frac{1}{4j}(q+i qi-j qj+kqk),\ x_3=\frac{1}{4k}(q+i qi+j qj-kqk).\nonumber
\end{eqnarray}

The facts just mentioned encouraged to base the study of quaternionic functions on analogs of the Cauchy-Riemann equations rather than on some notion of analyticity. The best known of such analogs is due to Fueter \cite{fueter1,fueter2} and it gave rise to a renowned function theory on Clifford algebras, for which we refer the reader to \cite{librosommen,librodaniele,libroshapiro} and references therein. Let us also mention the work \cite{laville2} on the specific topic of analyticity in this context.

A different notion of quaternionic analyticity arose from the theory introduced by Gentili and Struppa in \cite{cras,advances}. They gave the following definition, where $\s = \{q \in \hh : q^2 = -1\}$ denotes the $2$-sphere of \emph{imaginary units}.

\begin{definition}
Let $\Omega$ be a domain in $\hh$ and let $f : \Omega \to \hh$ be a function. For all $I \in \s$, let us denote $L_I = \rr + I \rr$, $\Omega_I = \Omega \cap L_I$ and $f_I = f_{|_{\Omega_I}}$. 
The function $f$ is called \emph{(slice) regular} if, for all $I \in \s$, the restriction $f_I$ is holomorphic, i.e. the function $\bar \partial_I f : \Omega_I \to \hh$ defined by
$$
\bar \partial_I f (x+Iy) = \frac{1}{2} \left( \frac{\partial}{\partial x}+I\frac{\partial}{\partial y} \right) f_I (x+Iy)
$$
vanishes identically.
\end{definition}

The definition immediately implies that any power series $f(q) = \sum_{n \in \nn} q^n a_n$ defines a regular function on its ball of convergence 
$$B(0,R) = \{q \in \hh : |q| <R\}$$ 
(a perfect analog of Abel's theorem holds). Moreover, the set of such series forms a real algebra when endowed with the usual addition $+$ and the multiplication $*$ defined in the following manner:
\begin{equation}\label{product}
\left(\sum_{n \in \nn} q^n a_n \right)*\left( \sum_{n \in \nn} q^n b_n \right) = \sum_{n \in \nn} q^n \sum_{k = 0}^n a_k b_{n-k}.
\end{equation}
In \cite{powerseries}, we considered series of the form
\begin{equation}\label{regularseries}
f(q) = \sum_{n \in \nn} (q-q_0)^{*n} a_n
\end{equation}
where $(q-q_0)^{*n} = (q-q_0)*\ldots*(q-q_0)$ denotes the $*$-product of $n$ copies of $q \mapsto q-q_0$. We were able to prove that the sets of convergence of such series are balls with respect to a distance $\sigma : \hh \times \hh \to \rr$ defined in the following fashion.

\begin{definition}
For all $p,q \in \hh$, we set
\begin{equation}
\sigma(q,p) = \left\{
\begin{array}{ll}
|q-p| & \mathrm{if\ } p,q \mathrm{\ lie\ on\ the\ same\ complex\ plane\ } L_I\\
\omega(q,p) &  \mathrm{otherwise}
\end{array}
\right.
\end{equation}
where
\begin{equation}
\omega(q,p) = \sqrt{\left[Re(q)-Re(p)\right]^2 + \left[|Im(q)| + |Im(p)|\right]^2}. 
\end{equation}
\end{definition}

More precisely, we proved the next theorem.

\begin{theorem}\label{sigmaanaliticita}
If $\Omega$ is a domain in $\hh$, a function $f : \Omega \to \hh$ is regular if and only if it is \emph{$\sigma$-analytic}, i.e. it admits at every $q_0 \in \Omega$ an expansion of type \eqref{regularseries} that is valid in a $\sigma$-ball $\Sigma(q_0,R) = \{q \in \hh : \sigma(q,q_0) <R\}$.
\end{theorem}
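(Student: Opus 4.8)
The plan is to prove the two implications separately, disposing of the straightforward one first. Assume $f$ is $\sigma$-analytic, so that around each $q_0 \in \Omega$ one has $f(q) = \sum_{n \in \nn}(q-q_0)^{*n} a_n$ on a $\sigma$-ball $\Sigma(q_0,R)$. Every partial sum is a $*$-polynomial, hence regular, and the convergence theory of \cite{powerseries} guarantees uniform convergence on the compact subsets of $\Sigma(q_0,R)$. A locally uniform limit of holomorphic functions being holomorphic, the restriction $f_I$ is holomorphic on $\Omega_I \cap \Sigma(q_0,R)$ for every $I \in \s$, so that $\bar\partial_I f$ vanishes there. Letting $q_0$ range over $\Omega$ and noting that each $\Sigma(q_0,R) \cap L_I$ contains a Euclidean disk centred at $q_0$ inside $L_I$, I would conclude that $f_I$ is holomorphic on all of $\Omega_I$ for every $I$, that is, that $f$ is regular on $\Omega$.

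For the converse, fix $q_0 = x_0 + I_0 y_0 \in \Omega$ with $y_0 \ge 0$ and first produce the coefficients. The restriction $f_{I_0}$ is holomorphic on $\Omega_{I_0}$, hence admits on a disk $\{q \in L_{I_0} : |q-q_0| < r\}$ an ordinary Taylor expansion $f_{I_0}(q) = \sum_{n \in \nn}(q-q_0)^n a_n$ with $a_n \in \hh$. I would take these $a_n$ as the candidate coefficients and set $g(q) = \sum_{n \in \nn}(q-q_0)^{*n} a_n$. The key elementary observation is that, since $q$ and $q_0$ commute on $L_{I_0}$, a short induction shows $(q-q_0)^{*n}$ restricts to the ordinary power $(q-q_0)^n$ on $L_{I_0}$; hence $g$ and $f$ share the same restriction to $L_{I_0}$ wherever $g$ converges. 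By the Cauchy--Hadamard result of \cite{powerseries} the domain of convergence of $g$ is exactly the $\sigma$-ball $\Sigma(q_0,R)$ with $R = 1/\limsup_n |a_n|^{1/n}$, and since $\Sigma(q_0,R) \cap L_{I_0}$ is precisely the disk of radius $R$ about $q_0$, we have $R = r$ and $f = g$ on this disk. Shrinking $R$ if necessary, I may also assume $\Sigma(q_0,R) \subseteq \Omega$, so that $f$ itself is regular on the whole $\sigma$-ball.

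It remains to upgrade the agreement $f = g$ from the slice $L_{I_0}$ to all of $\Sigma(q_0,R)$, and this is where the main work lies. I would analyse the geometry of the $\sigma$-ball: a direct computation with $\omega$ shows that, writing $q = x + Jy$ with $y \ge 0$, the set $\Sigma(q_0,R)$ splits into the Euclidean disk $\Sigma(q_0,R)\cap L_{I_0}$ together with the axially symmetric region $\tilde\Sigma = \{x + Jy : J \in \s,\ (x-x_0)^2 + (y + y_0)^2 < R^2\}$, whose intersection with $L_{I_0}$ is contained in the disk on which $f = g$ already holds. On $\tilde\Sigma$ the Representation Formula expresses the value $f(x+Jy)$ of any regular function explicitly in terms of its two slice values $f(x \pm I_0 y)$; applying it to both $f$ and $g$, which are regular on $\tilde\Sigma$ and agree at $x \pm I_0 y$, forces $f = g$ throughout $\tilde\Sigma$. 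Combined with the already-established agreement on the slice disk, this yields $f = g$ on all of $\Sigma(q_0,R)$. I expect the principal obstacle to be exactly this last step: one must verify that $\Sigma(q_0,R)$ admits the stated decomposition, that $\tilde\Sigma$ is an axially symmetric slice domain on which the Representation Formula is licensed, and that the slice values it requires fall inside the region where $f$ and $g$ have already been matched. The case $y_0 = 0$ of real $q_0$, where $\Sigma(q_0,R)$ is an honest Euclidean ball and $(q-q_0)^{*n} = (q-q_0)^n$ globally, should be recorded separately as a degenerate and easier instance.
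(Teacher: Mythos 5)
First, a structural remark: the paper you were given does not actually prove this theorem --- it recalls it from \cite{powerseries} (``More precisely, we proved the next theorem''), so the comparison below is with the argument of that earlier paper rather than with an in-text proof.

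Your proposal is essentially sound, but it runs on a different engine and misplaces the difficulty. The easy direction is the same in both: partial sums are regular polynomials, and locally uniform limits of holomorphic slice restrictions are holomorphic. For the converse, both start identically --- Taylor-expand $f_{I_0}$ on the slice through $q_0$, observe that $(q-q_0)^{*n}$ restricts to $(q-q_0)^n$ on $L_{I_0}$ by commutativity, and invoke the convergence theory of \cite{powerseries}. The divergence is in how agreement is pushed off the slice. The original route needs no pushing at all when $y_0\neq 0$: for $R\leq y_0$ the $\sigma$-ball $\Sigma(q_0,R)$ \emph{is} the slice disk (this is precisely the ``curious phenomenon'' the paper emphasizes), so slice agreement already is $\sigma$-ball agreement; the only center at which a $\sigma$-ball is forced to be four-dimensional is a real one, and there the original argument shows the Taylor coefficients are the same on every slice because they are computed from derivatives along the real axis. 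You instead propagate agreement via the Representation Formula of \cite{advancesrevised} applied to $f-g$ on the axially symmetric region $\tilde\Sigma$. Your decomposition $\Sigma(q_0,R)=\bigl(\Sigma(q_0,R)\cap L_{I_0}\bigr)\cup\tilde\Sigma$ is correct, $\tilde\Sigma$ is indeed a symmetric slice domain whenever $R>y_0$, and the slice values the formula calls for do lie in the disk where $f=g$; so the argument works, and it even buys something: validity of the expansion on \emph{every} $\sigma$-ball contained in $\Omega$, including four-dimensional ones at non-real centers, which is sharper than the statement requires. The cost is importing a tool from the symmetric-slice-domain theory into a theorem about arbitrary domains (legitimate, since \cite{advancesrevised} is logically independent of \cite{powerseries}, but less self-contained).

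The one genuine gap is your last sentence, which gets the difficulty exactly backwards. The case $y_0=0$ is not ``degenerate and easier'': it is the \emph{only} case in which the theorem unavoidably demands a four-dimensional expansion, since every $\sigma$-ball about a real point is a Euclidean ball. The two facts you cite there ($\Sigma(q_0,R)=B(q_0,R)$ and $(q-q_0)^{*n}=(q-q_0)^n$) do not close it, because your construction has only matched $f$ and $g$ on the single slice $L_{I_0}$, and nothing yet controls $f$ on the other slices of the ball. As written, this case is unproved. It is easily repaired --- your own $\tilde\Sigma$-argument applies verbatim with $\tilde\Sigma=B(q_0,R)$, which is a symmetric slice domain, or one argues as in \cite{powerseries} that the coefficients are slice-independent at a real center --- but the repair must be stated. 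Two harmless nits: the identification $R=r$ is unjustified (the radius of convergence can exceed the radius of the Taylor disk), though irrelevant since you shrink $R$ anyway; and the shrinking step should explicitly allow the outcome $R\leq y_0$, in which case $\tilde\Sigma=\emptyset$ and your ``main work'' step is vacuous rather than needed.
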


The analogy with the complex case is remarkable, but it should be taken into account that the topology induced by $\sigma$ is finer than the Euclidean: if $q_0 = x_0+Iy_0$ does not lie on the real axis then for $R<2y_0$ the $\sigma$-ball $\Sigma(q_0,R)$ reduces to a ($2$-dimensional) disk $\{z \in L_I : |z-q_0|<R\}$ in the complex plane $L_I$ through $q_0$ (see \cite{powerseries} for a detailed account on $\sigma$-balls). Hence the series expansion \eqref{regularseries}, in general, may not predict the behavior of $f$ in a Euclidean neighborhood of $q_0$, but only along the complex plane $L_I$ containing $q_0$. 
%
This curious phenomenon is partly explained by the fact that for a generic domain $\Omega$ in $\hh$, a regular function $f : \Omega \to \hh$ needs not be continuous, as shown by the next example.

\begin{example}
For a fixed $I \in \s$, we can define a regular function $f: \hh \setminus \rr \to \hh$ by setting
$$f(q) = \left\{ 
\begin{array}{ll}
0 \ \mathrm{if} \ q \in \hh \setminus L_I\\
1 \ \mathrm{if} \ q \in L_I \setminus \rr
\end{array}
\right.$$
Clearly, $f$ is not continuous.
\end{example}

However, \cite{advancesrevised} explained that real differentiability (and other interesting properties) are granted if $\Omega$ is carefully chosen. Firstly, let us consider the following class of domains.

\begin{definition}
Let $\Omega$ be a domain in $\hh$, intersecting the real axis. If,  for all $I \in \s$, $\Omega_I = \Omega \cap L_I$ is a domain in $L_I \simeq \cc$ then $\Omega$ is called a \textnormal{slice domain}.
\end{definition}

\begin{theorem}[Identity principle]
Let $\Omega$ be a slice domain and let $f,g : \Omega \to \hh$ be slice regular. Suppose that $f$ and $g$ coincide on a subset $C$ of $\Omega_I$, for some $I \in \s$. If $C$ has an accumulation point in $\Omega_I$, then $f \equiv g$ in $\Omega$.
\end{theorem}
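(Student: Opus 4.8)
The plan is to reduce to the homogeneous case and then proceed in two stages: first establish that the difference vanishes identically on the single slice $\Omega_I$ by invoking the classical identity theorem, and then \emph{propagate} this vanishing to every other slice through the real axis, which every slice shares. Concretely, set $h = f - g$. Since slice regularity is preserved by sums, $h$ is slice regular, and $h$ vanishes on $C$; the goal becomes proving $h \equiv 0$ on $\Omega$.

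For the first stage, I would work on the slice $\Omega_I$. Fix an imaginary unit $J \in \s$ orthogonal to $I$, so that $\{1, I, J, IJ\}$ is an orthonormal basis of $\hh$ and $\hh = L_I \oplus L_I J$ as a right $L_I$-vector space. Writing the restriction as $h_I = h_1 + h_2 J$ with $h_1, h_2 : \Omega_I \to L_I$, the defining equation $\bar\partial_I h_I = \frac{1}{2}\left( \frac{\partial}{\partial x} + I \frac{\partial}{\partial y} \right) h_I = 0$ splits, according to the two components, into the classical Cauchy--Riemann equations for $h_1$ and for $h_2$ viewed as functions with values in $L_I \simeq \cc$. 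Hence $h_1$ and $h_2$ are holomorphic on the planar domain $\Omega_I$. Since $\Omega$ is a slice domain, $\Omega_I$ is connected, and $h_1, h_2$ vanish on $C$, which by hypothesis has an accumulation point in $\Omega_I$; the classical identity theorem then forces $h_1 \equiv h_2 \equiv 0$, so that $h \equiv 0$ on $\Omega_I$.

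For the second stage I would exploit the real axis as a bridge between slices. Because $\Omega$ is a slice domain it meets $\rr$, and $\Omega \cap \rr$ is a nonempty open subset of $\rr$, hence an interval (or union of intervals) with nonempty interior. As $\rr \subseteq L_K$ for every $K \in \s$, this set is contained in every $\Omega_K$ and, by the first stage, $h$ vanishes on it. Now fix an arbitrary $K \in \s$: the restriction $h_K$ is, by the same splitting argument as above, a pair of holomorphic functions on the connected domain $\Omega_K$, and they vanish on $\Omega \cap \rr$, every point of which is an accumulation point lying in $\Omega_K$. The classical identity theorem applied on $\Omega_K$ yields $h \equiv 0$ on $\Omega_K$. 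Since $\Omega = \bigcup_{K \in \s} \Omega_K$, we conclude $h \equiv 0$ on $\Omega$, i.e. $f \equiv g$.

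The substantive point, and the place where the hypotheses are genuinely used, is the propagation in the third paragraph: the conclusion on a single slice is purely one-dimensional complex analysis, but transporting it to the whole of $\hh$ requires both that $\Omega$ intersect $\rr$ (to supply a common accumulation set for all slices) and that each $\Omega_K$ be connected (so the identity theorem applies slicewise). Without these features the statement fails, as the discontinuous regular function on $\hh \setminus \rr$ constructed in the example above already shows; the slice-domain definition is precisely tailored to rule out such pathologies.
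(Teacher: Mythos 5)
Your proof is correct, and it is essentially the canonical argument: this theorem is in fact not proved in the paper at all---it is stated as imported from the cited work of Gentili and Struppa---and your two-stage scheme (splitting the restriction $h_I$ into two holomorphic $L_I$-valued components to invoke the planar identity theorem on $\Omega_I$, then propagating the vanishing through the common set $\Omega\cap\rr$ to every other slice $\Omega_K$) is exactly the standard proof given in that reference. Both defining properties of a slice domain, connectedness of each slice $\Omega_K$ and nonempty intersection with the real axis, are used precisely where you use them, so there is nothing to add.
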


Secondly, let us consider slice domains having the additional property of axial symmetry with respect to the real axis, i.e. those slice domains $\Omega$ such that
$$\Omega = \bigcup_{x+Iy\in \Omega} x+y\s.$$
The word \emph{symmetric} will refer to this type of symmetry throughout the paper. A very peculiar property holds for regular functions $f$ on symmetric slice domains: they are affine when restricted to a single $2$-sphere $x+y\s$. Indeed, the Representation Formula proven in \cite{advancesrevised} can be restated as follows.
%
%
\begin{theorem}\label{representationformula}
Let $f$ be a regular function on a symmetric slice domain $\Omega$ and let $x_0+y_0\s \subset \Omega$. For all $q,q_0,q_1,q_2 \in x_0+y_0\s$ with $q_1 \neq q_2$
\begin{equation}
f(q) = (q_2-q_1)^{-1} \left[\bar q_1 f(q_1)-\bar q_2 f(q_2)\right] + q (q_2-q_1)^{-1} \left[f(q_2) - f(q_1)\right]
\end{equation}
and
\begin{equation}
f(q) = f(q_0)+ (q-q_0) (q_1-q_2)^{-1} \left[f(q_1) - f(q_2)\right]
\end{equation}
where $(q_2-q_1)^{-1} \left[f(q_2) - f(q_1)\right]$ and $(q_2-q_1)^{-1} \left[\bar q_1 f(q_1)-\bar q_2 f(q_2)\right]$ do not depend on the choice of $q_1,q_2$, but only on $x_0,y_0$.
\end{theorem}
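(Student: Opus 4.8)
The plan is to reduce the statement to the single fact that the restriction of $f$ to the sphere $x_0+y_0\s$ is an affine function of the quaternionic variable, and then to recover both displayed identities by solving a linear system while exploiting the geometric constraints satisfied by points of a common sphere. First I would fix an imaginary unit $J \in \s$ and introduce the auxiliary function
$$\tilde f(x+yI) = \tfrac{1}{2}(1-IJ)\,f(x+yJ) + \tfrac{1}{2}(1+IJ)\,f(x-yJ),$$
defined for $x+yI \in \Omega$ (with $y\ge 0$ and $I\in\s$); the symmetry of $\Omega$ guarantees that $x\pm yJ \in \Omega$ whenever $x+yI \in \Omega$, so that $\tilde f$ is well defined on all of $\Omega$. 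The goal of this first stage is to prove that $\tilde f \equiv f$.

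To this end I would show that $\tilde f$ is itself regular and that it coincides with $f$ on a set large enough to invoke the Identity principle. Regularity amounts to checking $\bar\partial_I \tilde f_I \equiv 0$ on each slice $\Omega_I$: writing $F(x,y)=f(x+yJ)$ and $G(x,y)=f(x-yJ)$, one uses that $F$ is holomorphic in the $J$-sense (whence $F_y=JF_x$) while $G$ is \emph{anti}-holomorphic (whence $G_y=-JG_x$), and a short computation with the constant coefficients $\tfrac{1}{2}(1\mp IJ)$ shows that the contributions to $\partial_x\tilde f_I + I\,\partial_y\tilde f_I$ cancel. Evaluating $\tilde f$ on the slice $L_J$, i.e. taking $I=J$ and using $J^2=-1$, gives $\tilde f = f$ on $\Omega_J$; since $\Omega$ is a slice domain, $\Omega_J$ is a domain and hence has accumulation points, so the Identity principle forces $\tilde f\equiv f$ throughout $\Omega$. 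This establishes the Representation Formula for the antipodal pair $x_0\pm y_0 J$, and in particular shows that on $x_0+y_0\s$ one has $f(q)=\alpha + q\beta$ for fixed $\alpha,\beta$ depending only on $x_0,y_0$ (substitute $I=(q-x_0)y_0^{-1}$).

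The second stage is purely algebraic. Given arbitrary $q_1\neq q_2$ in $x_0+y_0\s$, the relations $f(q_i)=\alpha+q_i\beta$ form a linear system whose unique solution is $\beta = (q_2-q_1)^{-1}[f(q_2)-f(q_1)]$ and $\alpha = f(q_1)-q_1\beta$; uniqueness then yields at once the asserted independence of $\alpha,\beta$ from the choice of $q_1,q_2$. To bring $\alpha$ into the stated form I would compute $\bar q_1 f(q_1)-\bar q_2 f(q_2) = (\bar q_1 - \bar q_2)\alpha + (|q_1|^2 - |q_2|^2)\beta$ and invoke the two geometric identities valid for points of one sphere, namely $|q_1|=|q_2|$ and $\bar q_1 - \bar q_2 = q_2-q_1$ (both immediate from $Re(q_i)=x_0$ and $|Im(q_i)|=y_0$): the second term vanishes and the first collapses to $(q_2-q_1)\alpha$, giving the first displayed identity. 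The second displayed identity is then simply $f(q)=\alpha+q\beta = f(q_0)+(q-q_0)\beta$, with $q_0$ serving as a base point on the sphere.

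I expect the main obstacle to be the regularity check for $\tilde f$. The coefficients $\tfrac{1}{2}(1\pm IJ)$ do not commute with $I$ or with $J$, so the cancellation in $\partial_x\tilde f_I + I\,\partial_y\tilde f_I$ must be carried out keeping strict track of the order of the quaternionic factors, and one must correctly recognize $G(x,y)=f(x-yJ)$ as anti-holomorphic rather than holomorphic. By comparison, the well-definedness and symmetry bookkeeping—ensuring $x\pm yJ\in\Omega$ and handling the sign of $y$—is routine, though it should be recorded explicitly.
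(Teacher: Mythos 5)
Your proof is correct. One point of context: the paper itself contains no proof of this theorem --- it is imported from \cite{advancesrevised} (``the Representation Formula proven in \cite{advancesrevised} can be restated as follows''), so there is no internal argument to compare against. What you have written is essentially the standard proof from that reference, reconstructed correctly: the auxiliary function $\tilde f(x+yI) = \tfrac{1}{2}(1-IJ)f(x+yJ)+\tfrac{1}{2}(1+IJ)f(x-yJ)$ is exactly the slice-extension operator used there, and your regularity check does go through --- the needed identities are $I(1-IJ)J = -(1-IJ)$ and $I(1+IJ)J = 1+IJ$, which make $I\,\partial_y\tilde f_I = -\partial_x\tilde f_I$ once one uses $F_y = JF_x$ and $G_y = -JG_x$, with all coefficients kept on the left as you insist. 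Coincidence with $f$ on $\Omega_J$ plus the Identity principle then gives $\tilde f\equiv f$, hence the affine form $f(q)=\alpha+q\beta$ on each sphere $x_0+y_0\s$. Your second, purely algebraic stage (solving the $2\times 2$ system for $\alpha,\beta$, and using $|q_1|=|q_2|$ together with $\bar q_1-\bar q_2 = q_2-q_1$ for points of a common sphere) is precisely the ``restatement'' step that converts the slice-extension formula into the two displayed two-point identities, and it also delivers the independence of the coefficients from the choice of $q_1,q_2$ via uniqueness of the solution. The only caveats worth recording explicitly are the trivial ones: the statement is vacuous when $y_0=0$, and the well-definedness of $\tilde f$ at real points (where $I$ is ambiguous) follows because the two coefficients sum to $1$.
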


As an immediate consequence of theorem \ref{representationformula}, if $f$ is a regular function on a symmetric slice domains then its values can all be recovered from those of one of its restrictions $f_I$. This allowed a further study of these functions in \cite{advancesrevised}, including the construction of a structure of real algebra for regular functions on a symmetric slice domain $\Omega$ (with the usual addition $+$ and a multiplication $*$ that extends the one defined in equation \eqref{product}) and the observation that a regular $f$ on a symmetric slice domain is $C^{\infty}(\Omega)$. This last result that has been improved to $f \in C^{\omega}(\Omega)$ in \cite{perotti}. This makes it reasonable to expect, in the case of symmetric slice domains, a stronger form of analyticity than the one presented in theorem \ref{sigmaanaliticita}. Finding this stronger notion of analyticity is the aim of the present paper, which introduces a new type of series expansions and proves their validity in Euclidean open sets.

We start  in section \ref{sectionseries} with a regular function $f$ on a symmetric slice domain $\Omega$, and we construct a formal expansion of the form
\begin{equation}\label{expansion0}
f(q) = \sum_{n \in \nn}[(q-x_0)^2+y_0^2]^n [A_{2n} + (q-q_0)A_{2n+1}]
\end{equation}
at each point $q_0 = x_0 + I y_0 \in\Omega$. We then prove what follows.

\begin{proposition}
Let $\{a_n\}_{n \in \nn}\subset \hh$ and suppose 
\begin{equation}
\limsup_{n \to +\infty} |a_n|^{1/n} = 1/R
\end{equation}
for some $R >0$. Let $q_0 = x_0+Iy_0 \in \hh$ with $x_0 \in \rr, y_0>0, I \in \s$ and set $P_{2n}(q) = [(q-x_0)^2+y_0^2]^n$ and $P_{2n+1}(q) = [(q-x_0)^2+y_0^2]^n(q-q_0)$ for all $n \in \nn$. Then the function series $\sum_{n \in \nn}P_n(q) a_n$ converges absolutely and uniformly on compact sets in 
\begin{equation}
U(x_0+y_0\s,R) = \{q \in \hh : |(q-x_0)^2+y_0^2| < R^2\},
\end{equation}
where it defines a regular function. Furthermore, the series diverges at every $q \in \hh \setminus \overline{U(x_0+y_0\s,R)}$.
\end{proposition}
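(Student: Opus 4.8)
The plan is to reduce everything to the elementary theory of power series in one real variable by exploiting the multiplicativity of the quaternionic modulus. Setting $\Delta(q) = (q-x_0)^2+y_0^2$ we have $P_{2n}(q)=\Delta(q)^n$ and $P_{2n+1}(q)=\Delta(q)^n(q-q_0)$, so that $|P_{2n}(q)|=|\Delta(q)|^n$ and $|P_{2n+1}(q)|=|\Delta(q)|^n\,|q-q_0|$. Writing $q=x+Jy$ with $J\in\s$ and $y\ge 0$, a direct computation yields $\Delta(q)=(x-x_0)^2-y^2+y_0^2+2(x-x_0)y\,J$ and therefore
\[
|\Delta(q)|^2=\big[(x-x_0)^2+(y-y_0)^2\big]\big[(x-x_0)^2+(y+y_0)^2\big],
\]
that is $|\Delta(q)|=|q-(x_0+Jy_0)|\,|q-(x_0-Jy_0)|$, the product of the distances from $q$ to the two points $x_0\pm Jy_0$ that the sphere $x_0+y_0\s$ meets on the slice $L_J$ through $q$. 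In particular $q\mapsto|\Delta(q)|$ is continuous and vanishes exactly on $x_0+y_0\s$.

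Fixing $q$ and writing $t=|\Delta(q)|$, $s=|q-q_0|$, I would group the terms of $\sum_n|P_n(q)|\,|a_n|$ in consecutive pairs, turning it into the power series $\sum_n t^n b_n$ in the real variable $t$, with $b_n=|a_{2n}|+s\,|a_{2n+1}|$. Since $\limsup_n|a_{2n}|^{1/(2n)}$ and $\limsup_n|a_{2n+1}|^{1/(2n+1)}$ are the two subsequential contributions to $\limsup_n|a_n|^{1/n}=1/R$, an elementary manipulation gives $\limsup_n b_n^{1/n}=1/R^2$, so this series has radius of convergence $R^2$. Hence $\sum_nP_n(q)a_n$ converges absolutely exactly when $|\Delta(q)|<R^2$, i.e.\ on $U(x_0+y_0\s,R)$, while for $|\Delta(q)|>R^2$ its general term fails to tend to $0$ and the series diverges.

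For uniform convergence on a compact $K\subset U(x_0+y_0\s,R)$ I would invoke the Weierstrass $M$-test: by continuity $\rho:=\max_{q\in K}|\Delta(q)|<R^2$ and $C:=\max_{q\in K}|q-q_0|<\infty$, so on $K$ the terms $|P_n(q)a_n|$ are dominated by those of the convergent numerical series $\sum_n\rho^n(|a_{2n}|+C\,|a_{2n+1}|)$. To recognise the sum as a regular function, note that each $P_n(q)a_n$ is a polynomial of the form $\sum_jq^jc_j$, hence slice regular, and that on every slice the partial sums converge uniformly on compact subsets of $U(x_0+y_0\s,R)\cap L_I$ to $f_I$; writing $f_I=F+G\,\mathcal J$ with $\mathcal J\in\s$ orthogonal to $I$ and $F,G$ taking values in $L_I$, the classical Weierstrass theorem shows $F,G$ and hence $f_I$ holomorphic, so $f$ is regular.

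Finally, to secure divergence on the open set $\hh\setminus\overline{U(x_0+y_0\s,R)}$ rather than merely on $\{|\Delta(q)|>R^2\}$, I would prove $\overline{U(x_0+y_0\s,R)}=\{|\Delta(q)|\le R^2\}$; the only nontrivial point is that each $q_*$ with $|\Delta(q_*)|=R^2$ is a limit of points where $|\Delta|<R^2$. Here I would argue slice by slice: on $L_I\cong\cc$ the polynomial $z\mapsto(z-x_0)^2+y_0^2$ is holomorphic and nonzero at $q_*$, so by the minimum modulus principle $|\Delta|$ has no local minimum there, and $U(x_0+y_0\s,R)$ accumulates at $q_*$. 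The main obstacle is the correct evaluation of $|\Delta(q)|$ together with the bookkeeping of the interleaved even and odd coefficients in the radius computation; the closure identification is a shorter, if subtle, topological point, dispatched cleanly by the minimum modulus principle.
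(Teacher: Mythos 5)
Your proof is correct, and although it follows the same broad strategy as the paper---reducing both convergence and divergence to root-test comparisons with real series---the supporting machinery is genuinely different. Where the paper proves a lemma giving the two-sided bound $\sqrt{r^2+y_0^2}-y_0 \le |q-q_0| \le \sqrt{r^2+y_0^2}+y_0$ (with $r^2=|(q-x_0)^2+y_0^2|$) via the conjugation identity $|(q-x_0)^2+y_0^2| = |q-q_0|\,|q-\tilde q_0|$ with $\tilde q_0=(q-q_0)\bar q_0(q-q_0)^{-1}$, you compute $|(q-x_0)^2+y_0^2|$ exactly, as the product of the distances from $q$ to the two points where the sphere $x_0+y_0\s$ meets the slice through $q$; and where the paper dominates (resp.\ minorizes) the series term by term by number series with $\limsup|c_n|^{1/n}=r/R$, you group consecutive terms in pairs to produce an honest real power series in $t=|(q-x_0)^2+y_0^2|$ of radius exactly $R^2$. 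Your route buys two things. First, for divergence you need no quantitative lower bound on $|q-q_0|$ (only $|q-q_0|>0$, which is automatic off the sphere), so the paper's lemma becomes dispensable. Second, and more substantively, you explicitly prove $\overline{U(x_0+y_0\s,R)}=\{q : |(q-x_0)^2+y_0^2|\le R^2\}$ via the slicewise minimum modulus principle; the paper silently assumes this when it asserts that $q\notin\overline{U}$ forces $|(q-x_0)^2+y_0^2|=r^2$ with $r>R$, so you have filled a genuine, if small, gap. What the paper's approach buys in exchange is economy: a single elementary estimate and no limsup bookkeeping for interleaved subsequences, which is the step of your argument requiring the most care (one must justify, e.g., that $\limsup_n|a_{2n+1}|^{1/n}=\bigl(\limsup_n|a_{2n+1}|^{1/(2n+1)}\bigr)^2$ and that the even and odd limsups have maximum $1/R$). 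One wording deserves correction: ``converges absolutely exactly when $|\Delta(q)|<R^2$'' claims too much, since nothing is known on the lemniscate $\{|\Delta(q)|=R^2\}$; what your argument shows---and all that is needed---is convergence for $t<R^2$ and divergence for $t>R^2$.
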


In section \ref{sectionintegral}, we obtain estimates for the coefficients of our new formal expansion \eqref{expansion0}. More precisely, we prove that for every $U = U(x_0+y_0\s,R)$ such that $\overline{U} \subset \Omega$, there exists a constant $C>0$ such that
$$|A_n| \leq C \cdot \frac{\max_{\partial U}|f|}{R^{n}}.$$ 
This result is achieved by means of new integral representations.

In section \ref{sectionanalytic} we define $f$ to be \emph{symmetrically analytic} if it admits at any $q_0 \in \Omega$ an expansion of type \eqref{expansion0} valid in a neighborhood of $q_0$. We are able to prove that regularity is equivalent to symmetric analyticity.

In section \ref{sectionmultiplicities}, we apply the new series expansion \eqref{expansion0} to the computation of the multiplicities of the zeros of regular functions. Finally, in section \ref{sectiondifferential}, we apply it to the computation of directional derivatives.

\begin{theorem}
Let $f$ be a regular function on a symmetric slice domain $\Omega$, and let $q_0 = x_0+Iy_0 \in \Omega$. For all $v \in \hh, |v|=1$ the derivative of $f$ along $v$ can be computed at $q_0$ as
\begin{equation}
\lim_{t\to 0} \frac{f(q_0+tv)-f(q_0)}{t} = v A_1 + (q_0v - v\bar q_0)A_2.
\end{equation}
\end{theorem}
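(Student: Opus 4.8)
The plan is to start from the new series expansion \eqref{expansion0}, whose validity in a Euclidean neighborhood of $q_0$ is guaranteed by the symmetric analyticity discussed in section \ref{sectionanalytic}. Since we are computing a directional derivative of a function that is known to be real-analytic (indeed $f \in C^\omega(\Omega)$), the limit exists and equals the first-order term in $t$ of the Taylor expansion of $t \mapsto f(q_0+tv)$. So the heart of the computation is to substitute $q = q_0 + tv$ into
\begin{equation}
f(q) = \sum_{n \in \nn}[(q-x_0)^2+y_0^2]^n [A_{2n} + (q-q_0)A_{2n+1}] \nonumber
\end{equation}
and extract the coefficient of $t^1$.

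First I would analyze the two basic building blocks separately. For the factor $(q-q_0)$, substituting $q = q_0 + tv$ gives simply $tv$, contributing the term $tv A_1$ from the $n=0$ part. For the factor $(q-x_0)^2 + y_0^2$, I would write $q - x_0 = (q_0 - x_0) + tv = Iy_0 + tv$ and expand
\begin{equation}
(Iy_0 + tv)^2 + y_0^2 = (Iy_0)^2 + y_0^2 + t(Iy_0 \cdot v + v \cdot Iy_0) + t^2 v^2. \nonumber
\end{equation}
The key observation is that $(Iy_0)^2 + y_0^2 = -y_0^2 + y_0^2 = 0$, so this quantity vanishes to first order: it equals $t\,y_0(Iv + vI) + O(t^2)$. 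Consequently $[(q-x_0)^2+y_0^2]^n = O(t^n)$, meaning every term with $n \geq 2$ contributes only to order $t^2$ and higher, and can be discarded. Only the $n=0$ and $n=1$ terms of the expansion matter for the linear coefficient.

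The remaining work is to collect the order-$t$ contributions. From $n=0$ we get $A_0 + (q-q_0)A_1$, whose linear part is $tv A_1$ (the constant $A_0$ drops out, as expected, being $f(q_0)$). From $n=1$ we get $[(q-x_0)^2+y_0^2][A_2 + (q-q_0)A_3]$; since the bracket is already $O(t)$, only its product with the constant $A_2$ survives to first order, the $(q-q_0)A_3$ piece being $O(t^2)$. This yields a linear contribution $t\,y_0(Iv+vI)A_2$. I would then simplify the coefficient $y_0(Iv+vI)$ into the form appearing in the statement: since $q_0 = x_0 + Iy_0$ and $\bar q_0 = x_0 - Iy_0$, one has $q_0 v - v \bar q_0 = (x_0 + Iy_0)v - v(x_0 - Iy_0) = y_0(Iv + vI)$, the real parts cancelling. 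Summing the two surviving linear terms gives exactly $v A_1 + (q_0 v - v\bar q_0)A_2$.

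The main obstacle is the justification, rather than the algebra: one must ensure that differentiating the series term-by-term is legitimate. This is where I would lean on the uniform convergence on compact sets established earlier, which permits interchanging the limit $t \to 0$ with the infinite sum, so that the directional derivative is indeed read off term by term. With that interchange secured, the computation above is essentially a finite one, since all but the first two terms of the series are manifestly $O(t^2)$ and contribute nothing to the derivative.
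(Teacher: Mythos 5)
Your proposal is correct and takes essentially the same route as the paper: substitute $q = q_0+tv$ into the expansion, observe that $(q-x_0)^2+y_0^2 = t(tv^2+q_0v-v\bar q_0) = O(t)$ so that every term with $n \geq 2$ contributes only $O(t^2)$, and read off the linear coefficient $vA_1 + (q_0v-v\bar q_0)A_2$ from the $n=0$ and $n=1$ terms; your identity $y_0(Iv+vI) = q_0v - v\bar q_0$ is exactly the paper's computation in slightly different notation. If anything, you are more careful than the paper, which does not comment at all on justifying the treatment of the infinite tail (for full rigor one bounds the tail uniformly by $O(t^2)$ using the Cauchy estimates on the coefficients $A_n$, rather than bare uniform convergence of the original series).
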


We conclude by studying the bearings of this result in (real and) complex coordinates: it turns out that if we fix $q_0$ and choose appropriate complex coordinates on $\hh$, then $f$ is complex differentiable at $q_0$ and its Jacobian at the same point can be easily computed in terms of $A_1$ and $A_2$.



\section{A new series expansion}\label{sectionseries}

Let us recall a result from \cite{singularities} (extending \cite{zeros}) and derive from it the subsequent theorem.

\begin{proposition}
Let $f$ be a regular function on a symmetric slice domain $\Omega$. A point $q_0 \in \Omega$ is a zero of $f$ if and only if there exists a regular function $g : \Omega \to \hh$ such that
$f(q) = (q-q_0)*g(q).$
\end{proposition}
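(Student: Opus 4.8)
The plan is to prove the two implications separately, handling the local picture with the $\sigma$-analytic expansion of Theorem~\ref{sigmaanaliticita} and the global statement with the Representation Formula (Theorem~\ref{representationformula}) and the Identity Principle. For the easy direction, suppose $f=(q-q_0)*g$ with $g$ regular. Expanding $g$ in a $\sigma$-ball $\Sigma(q_0,R)$ as $g(q)=\sum_n (q-q_0)^{*n}b_n$ gives $f(q)=\sum_n (q-q_0)^{*(n+1)}b_n$, so the order-zero coefficient of the $\sigma$-expansion of $f$ vanishes; since $(q-q_0)^{*m}$ vanishes at $q_0$ for every $m\ge 1$, evaluating the series at $q_0$ yields $f(q_0)=0$.

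For the converse I assume $f(q_0)=0$ and aim to produce $g$ globally. The key device is the real polynomial $\Delta_{q_0}(q):=(q-x_0)^2+y_0^2=(q-q_0)*(q-\bar q_0)$, which has real coefficients, hence is $*$-central, and vanishes exactly on the sphere $S=x_0+y_0\s$. Consequently $q-q_0$ admits the $*$-inverse $(q-q_0)^{-*}=\Delta_{q_0}^{-1}(q-\bar q_0)$, and I set $\tilde g:=\Delta_{q_0}^{-1}\,(q-\bar q_0)*f$, which is regular on $\Omega\setminus S$ and satisfies $(q-q_0)*\tilde g=f$ there (because $\Delta_{q_0}^{-1}$ commutes and $(q-q_0)*(q-\bar q_0)=\Delta_{q_0}$).

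Next I remove the apparent singularity of $\tilde g$ along $S$. Restricting to the slice $L_{I_0}$, where $z,q_0,\bar q_0$ all commute, the left factor $q-\bar q_0$ is $L_{I_0}$-valued, so $\big((q-\bar q_0)*f\big)_{I_0}(z)=(z-\bar q_0)f_{I_0}(z)$ and therefore $\tilde g_{I_0}(z)=(z-q_0)^{-1}f_{I_0}(z)$. As $f_{I_0}$ is holomorphic with $f_{I_0}(q_0)=0$, this has a removable singularity at $q_0$ (and is already regular at $\bar q_0$), hence extends holomorphically to all of $\Omega_{I_0}$. Feeding this holomorphic slice datum into the Representation Formula produces a regular function $g$ on $\Omega$ whose restriction to $L_{I_0}$ is the extension of $\tilde g_{I_0}$. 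Since $\Omega\setminus S$ is again a symmetric slice domain and $g=\tilde g$ on $\Omega_{I_0}\setminus\{q_0,\bar q_0\}$, the Identity Principle forces $g=\tilde g$ throughout $\Omega\setminus S$; thus $g$ is a regular extension of $\tilde g$, and $f=(q-q_0)*g$ holds on all of $\Omega$ because both sides are regular and agree on the dense open set $\Omega\setminus S$.

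I expect the extension step to be the main obstacle. The local factor is produced only on a $\sigma$-ball, which for non-real $q_0$ is a two-dimensional disk rather than a Euclidean neighborhood, so naive analytic continuation in the four real variables is unavailable. The crux is to show that the a priori singularity of $(q-q_0)^{-*}*f$ along the \emph{entire} sphere $S$ is removable even though $f$ is assumed to vanish only at the single point $q_0$; this is exactly what the slice identity $\tilde g_{I_0}(z)=(z-q_0)^{-1}f_{I_0}(z)$ and the Representation Formula deliver, converting the scalar vanishing $f(q_0)=0$ into regularity of $g$ across all of $S$.
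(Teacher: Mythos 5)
A preliminary remark: the paper itself contains \emph{no} proof of this proposition --- it is recalled from \cite{singularities} (extending \cite{zeros}) and used as a black box to derive the theorem that follows it. So your argument can only be compared with the standard proof in that cited literature, which goes: restrict $f$ to the slice $L_{I_0}$ through $q_0$, split $f_{I_0}=F_1+F_2J$ into two holomorphic components, factor $F_i(z)=(z-q_0)G_i(z)$ using $F_i(q_0)=0$, extend the holomorphic map $G_1+G_2J$ to a regular $g$ on all of $\Omega$ by the Extension Lemma of \cite{advancesrevised}, and conclude $f=(q-q_0)*g$ by the Identity Principle. Your proof is correct, and at its core it is this same argument: your identity $\tilde g_{I_0}(z)=(z-q_0)^{-1}f_{I_0}(z)$ \emph{is} the slicewise holomorphic division (the removability at $q_0$ is exactly where the hypothesis $f(q_0)=0$ enters), and your ``feeding the slice datum into the Representation Formula'' is really an appeal to the Extension Lemma --- note that Theorem \ref{representationformula} as stated applies to functions already known to be regular on $\Omega$, so strictly speaking you should cite the extension result of \cite{advancesrevised} rather than the Representation Formula itself. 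What your route adds is the explicit global candidate $\tilde g=(q-q_0)^{-*}*f$ on $\Omega\setminus S$, built from the symmetrization $\Delta_{q_0}$; this is a nice conceptual packaging (it explains \emph{why} the apparent singularity along the whole sphere $S$ must be removable), but it is logically dispensable: once $\tilde g_{I_0}$ is known to extend holomorphically across $q_0$, you can define $g$ as its regular extension and verify $\bigl((q-q_0)*g\bigr)_{I_0}(z)=(z-q_0)g_{I_0}(z)=f_{I_0}(z)$ directly, concluding with one application of the Identity Principle on $\Omega$ --- thereby skipping the checks that $\Omega\setminus S$ is a symmetric slice domain and that $\Delta_{q_0}^{-1}$ is $*$-central. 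Similarly, in the easy direction the $\sigma$-series computation works but is roundabout: since $q-q_0$ is $L_{I_0}$-valued on $L_{I_0}$, one has $f_{I_0}(z)=(z-q_0)g_{I_0}(z)$, whence $f(q_0)=0$ immediately.
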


\begin{theorem}
Let $f$ be a regular function on a symmetric slice domain $\Omega$. For each $q_0 \in \Omega$, let us denote as $R_{q_0}f : \Omega \to \hh$ the function such that
$$f(q) = f(q_0) + (q-q_0)*R_{q_0}f(q).$$
If $q_0 = x_0 + I y_0$ for $x_0,y_0 \in \rr$ and for some $I \in \s$, then
$$f(q)= f(q_0) + (q-q_0) R_{q_0}f(\bar q_0) +  [(q-x_0)^2+y_0^2] R_{\bar q_0}R_{q_0}f(q).$$
for all $q \in \Omega$.
\end{theorem}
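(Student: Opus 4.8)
The plan is to apply the left-remainder operator $R_{q_0}$ twice and then exploit the Representation Formula. First I would use the defining property of $R_{q_0}f$, namely $f(q) = f(q_0) + (q-q_0)*R_{q_0}f(q)$, and then apply the same decomposition to the regular function $g := R_{q_0}f$ at the \emph{conjugate} point $\bar q_0$. That is, I would write $R_{q_0}f(q) = R_{q_0}f(\bar q_0) + (q-\bar q_0)*R_{\bar q_0}R_{q_0}f(q)$, where $R_{\bar q_0}R_{q_0}f$ is the left-remainder of $R_{q_0}f$ at $\bar q_0$. Substituting back gives
\begin{equation}
f(q) = f(q_0) + (q-q_0)*\bigl[R_{q_0}f(\bar q_0) + (q-\bar q_0)*R_{\bar q_0}R_{q_0}f(q)\bigr].\nonumber
\end{equation}

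Next I would simplify the two $*$-products. The inner term $(q-q_0)*R_{q_0}f(\bar q_0)$ involves $*$-multiplying the linear regular function $q \mapsto q-q_0$ by the \emph{constant} $R_{q_0}f(\bar q_0)$; since the $*$-product with a function whose restriction to a slice is constant should reduce to ordinary multiplication on the right, this term becomes $(q-q_0)R_{q_0}f(\bar q_0)$, as in the statement. The crucial simplification is the remaining term $(q-q_0)*(q-\bar q_0)$: because these two linear factors have conjugate "second points" $q_0$ and $\bar q_0$, their $*$-product should collapse to the real-coefficient polynomial $(q-q_0)(q-\bar q_0) = q^2 - (q_0+\bar q_0)q + q_0\bar q_0 = q^2 - 2x_0 q + (x_0^2+y_0^2) = (q-x_0)^2 + y_0^2$. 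I would verify this identity directly from the definition of $*$ in \eqref{product}, noting that a polynomial with real coefficients lies in the center of the $*$-algebra, so it can be factored out as an ordinary product multiplying the regular function $R_{\bar q_0}R_{q_0}f(q)$ on the left. Assembling these three pieces yields exactly the claimed formula.

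The main obstacle I anticipate is justifying the two reductions of the $*$-product cleanly, namely that $(q-q_0)*c = (q-q_0)c$ for the constant $c = R_{q_0}f(\bar q_0)$ and that $(q-q_0)*(q-\bar q_0)$ equals the real polynomial $(q-x_0)^2 + y_0^2$. For the first, I would invoke the general fact that $*$-multiplication on the right by a constant (the value of a function, an element of $\hh$) agrees with pointwise right multiplication, which follows from the definition \eqref{product} applied to the power series of $q-q_0$. For the second, the key point is that the product of a regular function with a real-coefficient polynomial $\Delta(q) = (q-x_0)^2+y_0^2$ coincides with ordinary multiplication and commutes, since the symmetrization $(q-q_0)*(q-\bar q_0)$ equals $(q-q_0)(q-\bar q_0)$ precisely because $\bar q_0$ is the conjugate of $q_0$; this is the quaternionic analog of the fact that $(z-z_0)(z-\bar z_0)$ has real coefficients. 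Once these two algebraic facts are in place, the substitution and regrouping are purely formal, and the regularity of the resulting coefficient function $R_{\bar q_0}R_{q_0}f$ is inherited from the preceding proposition, which guarantees that each left-remainder of a regular function is again regular on $\Omega$.
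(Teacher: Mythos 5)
Your proposal is correct and follows essentially the same route as the paper: apply the remainder decomposition once at $q_0$, then again to $R_{q_0}f$ at the conjugate point $\bar q_0$, substitute, and collapse the $*$-products using the identity $(q-q_0)*(q-\bar q_0) = q^2 - 2x_0 q + x_0^2 + y_0^2 = (q-x_0)^2+y_0^2$. The paper's proof is exactly this computation (with the two simplifications of the $*$-product — right multiplication by a constant and left multiplication by a real-coefficient polynomial — used implicitly rather than spelled out as you do).
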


\begin{proof}
The existence of a regular $R_{q_0}f : \Omega \to \hh$ such that
$$f(q) = f(q_0) + (q-q_0)*R_{q_0}f(q).$$
is granted by the previous proposition, since $f-f(q_0)$ is a regular function on $\Omega$ vanishing at $q_0$.
Applying the same procedure to $R_{q_0}f$ at the point $\bar q_0$ yields
$$f(q) = f(q_0) + (q-q_0)*\left[R_{q_0}f(\bar q_0) + (q-\bar q_0)*R_{\bar q_0}R_{q_0}f(q)\right] = $$
$$= f(q_0) + (q-q_0) R_{q_0}f(\bar q_0) +  [(q-x_0)^2+y_0^2] R_{\bar q_0}R_{q_0}f(q)$$
where we have taken into account that
$$(q-q_0)*(q-\bar q_0) = q^2-q(q_0+\bar q_0) + q_0\bar q_0 = q^2-q2x_0 + x_0^2+y_0^2 = (q-x_0)^2+y_0^2.$$
\end{proof}

If we repeatedly apply the previous theorem, we get the formal expansion
\begin{eqnarray}\nonumber
&&f(q)=f(q_0) + \left(q-q_0\right) R_{q_0}f(\bar q_0) +\\ \nonumber
&& +[(q-x_0)^2+y_0^2] \left[R_{\bar q_0}R_{q_0}f(q_0) + (q-q_0) R_{q_0}R_{\bar q_0}R_{q_0}f(\bar q_0) \right]+\ldots+\\ \nonumber
&& + [(q-x_0)^2+y_0^2]^n \left[(R_{\bar q_0}R_{q_0})^{n}f(q_0) + (q-q_0) R_{q_0}(R_{\bar q_0}R_{q_0})^{n}f(\bar q_0) \right] + \ldots
\end{eqnarray}
where $(R_{\bar q_0}R_{q_0})^{n}$ denotes the $n$th iterate of $R_{\bar q_0}R_{q_0}$.
If $A_{2n} = (R_{\bar q_0}R_{q_0})^{n}f(q_0)$ and $A_{2n+1} = R_{q_0}(R_{\bar q_0}R_{q_0})^{n}f(\bar q_0)$ for all $n \in \nn$, our new formal expansion reads as
\begin{equation}\label{expansion}
f(q) = \sum_{n \in \nn}P_n(q) A_n
\end{equation}
where $P_{2n}(q) = [(q-x_0)^2+y_0^2]^n$ and $P_{2n+1}(q) = [(q-x_0)^2+y_0^2]^n(q-q_0)$ for all $n \in \nn$. Let us study the sets of convergence of function series of this type. 

\begin{lemma}
Let $x_0 \in \rr, y_0>0, q \in \hh$ and let $r\geq0$ be such that $|(q-x_0)^2+y_0^2| = r^2$. If $q_0 = x_0 + I y_0$ for some $I \in \s$ then
$$\sqrt{r^2+y_0^2}-y_0 \leq |q-q_0| \leq \sqrt{r^2+y_0^2}+y_0$$
\end{lemma}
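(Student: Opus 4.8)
The plan is to reduce the two-sided estimate on $|q-q_0|$ to a single elementary inequality, by first computing both $r^2 = |(q-x_0)^2 + y_0^2|$ and $|q-q_0|$ in terms of the real part and the size of the imaginary part of $q$. Writing $u = Re(q)-x_0$ and $v = |Im(q)| \ge 0$, I would choose $J \in \s$ so that $q - x_0 = u + Jv$ lies on the slice $L_J$ (any $J$ will do when $v=0$) and work inside the commutative plane $L_J \cong \cc$. There $(q-x_0)^2 + y_0^2$ factors as $(q - x_0 - Jy_0)(q - x_0 + Jy_0)$, so taking moduli and using that $|\cdot|$ is multiplicative on $L_J$ gives
$$ r^2 = |(q-x_0)^2+y_0^2| = \sqrt{P}\,\sqrt{Q}, \qquad P := u^2 + (v-y_0)^2, \quad Q := u^2 + (v+y_0)^2. $$

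The second computation is the modulus of $q - q_0 = u + (Jv - Iy_0)$. Its real part is $u$, and expanding $|Jv - Iy_0|^2$ yields $|q-q_0|^2 = u^2 + v^2 + y_0^2 - 2vy_0\langle I,J\rangle$. Since $\langle I,J\rangle \in [-1,1]$, this quantity ranges exactly over the interval $[P,Q]$, with the endpoints attained at $\langle I,J\rangle = 1$ and $\langle I,J\rangle = -1$. Hence $\sqrt P \le |q-q_0| \le \sqrt Q$, and it suffices to prove $\sqrt P \ge \sqrt{r^2+y_0^2}-y_0$ and $\sqrt Q \le \sqrt{r^2+y_0^2}+y_0$.

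This is where the argument concentrates. Substituting $r^2 = \sqrt{PQ}$, squaring, and cancelling a factor of $\sqrt P$ (respectively $\sqrt Q$) shows that both target inequalities are equivalent to the single statement $\sqrt Q - \sqrt P \le 2y_0$. I expect this reduction to be the main obstacle: one is matching the mixed expression $\sqrt{r^2+y_0^2}\pm y_0$, in which $r^2$ is itself the geometric mean $\sqrt{PQ}$, against the clean bounds $\sqrt P$ and $\sqrt Q$, and one must track the degenerate cases $P=0$ and $\sqrt Q \le y_0$ where the squaring step is not reversible. These I would dispatch separately; for instance $P=0$ forces $r=0$ and then $\sqrt{r^2+y_0^2}-y_0 = 0 = \sqrt P$, so the bound holds trivially.

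Finally, the key inequality $\sqrt Q - \sqrt P \le 2y_0$ follows by rationalizing. Since $Q - P = 4vy_0$,
$$ \sqrt Q - \sqrt P = \frac{Q-P}{\sqrt Q + \sqrt P} = \frac{4vy_0}{\sqrt Q + \sqrt P}, $$
so for $y_0>0$ the claim is precisely $2v \le \sqrt P + \sqrt Q$. This is immediate from $\sqrt Q = \sqrt{u^2+(v+y_0)^2} \ge v+y_0$ together with $\sqrt P = \sqrt{u^2+(v-y_0)^2} \ge v-y_0$, whose sum is exactly $2v$. Feeding this back through the chain of reductions yields both desired inequalities and proves the lemma.
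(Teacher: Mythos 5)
Your proof is correct, but it takes a genuinely different route from the paper's. The paper factors the quadratic pointwise as $(q-x_0)^2+y_0^2 = (q-q_0)\left[(q-q_0)^{-1}q(q-q_0)-\bar q_0\right]$, so that $r^2 = |q-q_0|\,|q-\tilde q_0|$ where $\tilde q_0 = (q-q_0)\bar q_0(q-q_0)^{-1}$ lies on the same sphere $x_0+y_0\s$; since $|q_0-\tilde q_0| \le 2y_0$ (the diameter of the sphere), a short argument by contradiction using the triangle inequality yields both bounds, with the degenerate case $r=0$ (i.e.\ $q \in x_0+y_0\s$, where $q-q_0$ may fail to be invertible) dispatched first. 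You instead factor in the slice $L_J$ containing $q$ rather than conjugating relative to $q_0$: your $\sqrt P$ and $\sqrt Q$ are the distances from $q$ to the two points $x_0 \pm Jy_0$, which are precisely the nearest and farthest points of the sphere $x_0+y_0\s$ from $q$, and your formula $|q-q_0|^2 = u^2+v^2+y_0^2-2vy_0\langle I,J\rangle$ identifies the exact range $[\sqrt P,\sqrt Q]$ of $|q-q_0|$ as $q_0$ runs over the sphere --- strictly more information than the lemma asserts. The two arguments share the same underlying mechanism, namely a factorization of $r^2$ as a product of two distances together with the diameter bound $2y_0$ (appearing in your proof as $\sqrt Q - \sqrt P \le 2y_0$, proved by rationalizing), but yours is direct rather than by contradiction, fully explicit in the real coordinates $u,v$, and correctly isolates the degenerate cases $P=0$ and $\sqrt Q \le y_0$ where squaring is not reversible; the price is a longer chain of elementary reductions where the paper's conjugation trick finishes in a few lines. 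Both proofs are valid.
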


\begin{proof}
If $r=0$, i.e. $q \in x_0+y\s$, then clearly $0 \leq  |q-q_0| \leq 2y_0$. Else $q \not \in x_0+y\s$ and
$$|(q-x_0)^2+y_0^2| = |q-q_0| |(q-q_0)^{-1}q(q-q_0) - \bar q_0| = |q-q_0||q-\tilde{q}_0|$$
where $\tilde{q}_0 = (q-q_0)\bar q_0(q-q_0)^{-1} \in x_0 +y_0 \mathbb{S}$.
If $|q-q_0| > \sqrt{r^2+y_0^2}+y_0$ then 
$$|q-\tilde{q}_0| \geq |q-q_0|-|q_0-\tilde{q}_0| \geq |q-q_0| - 2y_0 > \sqrt{r^2+y_0^2} -y_0$$
so that
$$|(q-x_0)^2+y_0^2| = |q-q_0||q-\tilde{q}_0|> r^2+y_0^2-y_0^2 = r^2$$
a contradiction with the hypothesis. A similar reasoning excludes that $|q-q_0| < \sqrt{r^2+y_0^2}-y_0$.
\end{proof}

\begin{proposition}\label{convergence}
Let $\{a_n\}_{n \in \nn}\subset \hh$ and suppose 
\begin{equation}
\limsup_{n \to +\infty} |a_n|^{1/n} = 1/R
\end{equation}
for some $R >0$. Let $q_0 = x_0+Iy_0 \in \hh$ with $x_0 \in \rr, y_0>0, I \in \s$ and set $P_{2n}(q) = [(q-x_0)^2+y_0^2]^n$ and $P_{2n+1}(q) = [(q-x_0)^2+y_0^2]^n(q-q_0)$ for all $n \in \nn$. Then the function series 
\begin{equation}\label{series}
\sum_{n \in \nn}P_n(q) a_n
\end{equation}
converges absolutely and uniformly on compact sets in 
\begin{equation}
U(x_0+y_0\s,R) = \{q \in \hh : |(q-x_0)^2+y_0^2| < R^2\},
\end{equation}
where it defines a regular function. Furthermore, the series \eqref{series} diverges at every $q \in \hh \setminus \overline{U(x_0+y_0\s,R)}$.
\end{proposition}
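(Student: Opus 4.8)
The plan is to reduce everything to the single real quantity $r = |(q-x_0)^2+y_0^2|^{1/2}$ and then argue exactly as for a complex power series via the root test. First I would record the effect of $P_n$ on moduli: since the modulus is multiplicative on $\hh$, one has $|P_{2n}(q)| = r^{2n}$ and $|P_{2n+1}(q)| = r^{2n}|q-q_0|$. The key asymptotic is $|P_n(q)|^{1/n} \to r$: along even indices $n=2m$ this quantity equals $r$ identically, while along odd indices $n=2m+1$ it equals $r^{2m/(2m+1)}|q-q_0|^{1/(2m+1)}$, which tends to $r$ as soon as $r>0$ and $q\neq q_0$. The points with $r=0$ are precisely those of $x_0+y_0\s$; there $P_n\equiv 0$ for $n\geq 2$, so the series reduces to the finite sum $a_0+(q-q_0)a_1$ and such $q$ lie in $U(x_0+y_0\s,R)$, so they may be dismissed at once.

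With the limit $|P_n(q)|^{1/n}\to r$ in hand, I would use the elementary fact that $\limsup_n(b_nc_n)=b\,\limsup_n c_n$ whenever $b_n\to b\geq 0$ and $\limsup_n c_n<\infty$ (with $c_n\geq 0$), applied to $b_n=|P_n(q)|^{1/n}$ and $c_n=|a_n|^{1/n}$, to obtain
\[
\limsup_{n\to\infty}|P_n(q)a_n|^{1/n}=r\cdot\limsup_{n\to\infty}|a_n|^{1/n}=\frac{r}{R}.
\]
The root test then gives absolute convergence whenever $r/R<1$, i.e.\ $|(q-x_0)^2+y_0^2|<R^2$, and divergence (the general term fails to vanish) whenever $r/R>1$, i.e.\ $q\in\hh\setminus\overline{U(x_0+y_0\s,R)}$. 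This settles both the pointwise convergence inside $U$ and the divergence outside $\overline{U}$.

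For local uniform (and absolute) convergence I would pass to the Weierstrass $M$-test on a compact $K\subset U$, assuming $\rho_0:=\max_K|(q-x_0)^2+y_0^2|>0$ (the case $\rho_0=0$ being the trivial finite sum above) and setting $r_0=\sqrt{\rho_0}<R$. Here the preceding lemma makes the estimate clean: it bounds $|q-q_0|\leq\sqrt{\rho_0+y_0^2}+y_0=:L$ on $K$ in terms of $\rho_0$ alone, whence $|P_n(q)|\leq C\,r_0^{\,n}$ on $K$ with $C=\max(1,L/r_0)$. Choosing $\varepsilon>0$ with $r_0(1/R+\varepsilon)<1$ and $N$ with $|a_n|\leq(1/R+\varepsilon)^n$ for $n\geq N$ (possible since $\limsup_n|a_n|^{1/n}=1/R$), I obtain the dominating geometric series $|P_n(q)a_n|\leq C\,\bigl(r_0(1/R+\varepsilon)\bigr)^n$, uniform in $q\in K$, so the series converges uniformly on $K$.

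Finally, to see the sum is regular, I would observe that each $P_n$ is an ordinary polynomial in $q$ with coefficients on the right: $P_{2n}(q)=(q^2-2x_0q+x_0^2+y_0^2)^n$ expands to $\sum_k q^k c_k$ with $c_k\in\rr$, and $P_{2n+1}(q)=P_{2n}(q)(q-q_0)$ is again of the form $\sum_k q^k b_k$; hence every partial sum is a regular polynomial, and a locally uniform limit of regular functions is regular, since on each slice $L_I$ a locally uniform limit of holomorphic functions is holomorphic, so $\bar\partial_I$ still annihilates the limit. The step requiring the most care is the asymptotic $|P_n(q)|^{1/n}\to r$ with its even/odd splitting, together with the passage from the ``$\limsup$ of a product'' to ``the limit factor times the $\limsup$''; once this is secured the convergence region and the divergence are immediate, while the lemma's two-sided control of $|q-q_0|$ by $r$ is exactly what keeps the uniform estimate clean and prevents the odd terms from either inflating the bound or vanishing spuriously.
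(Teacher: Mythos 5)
Your proof is correct and follows essentially the same route as the paper's: both rest on the multiplicativity of the quaternionic modulus, the preceding lemma's two-sided bound on $|q-q_0|$ in terms of $r$, a root-test/comparison argument for convergence inside $U(x_0+y_0\s,R)$ and divergence outside its closure, and regularity of the sum as a locally uniform limit of regular polynomials. Your even/odd limsup computation and explicit Weierstrass $M$-test are just a more detailed packaging of the paper's domination by a number series $\sum_n c_n$ with $\limsup_n |c_n|^{1/n}=r/R$.
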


\begin{proof}
If $K$ is a compact subset of $U(x_0+y_0\s,R)$ then there exists $r<R$ such that $|(q-x_0)^2+y_0^2| \leq r^2$ for all $q \in K$. Thus, for all $q \in K$
$$|P_{2n}(q) a_{2n}| = |(q-x_0)^2+y_0^2|^n |a_{2n}| \leq r^{2n} |a_{2n}|$$
while (thanks to the previous lemma)
$$|P_{2n+1}(q) a_{2n+1}| = |(q-x_0)^2+y_0^2|^n |q-q_0| |a_{2n+1}| \leq$$
$$\leq r^{2n} \left(\sqrt{r^2+y_0^2}+y_0\right) |a_{2n+1}|.$$
Hence \eqref{series} is dominated on $K$ by a number series $\sum_{n \in \nn} c_n$ with 
$$\limsup_{n \to +\infty} |c_n|^{1/n} = r/R<1.$$ 
This guarantees absolute and uniform convergence of \eqref{series} in $K$. This, in turn, proves the regularity of the sum (since all the addends in \eqref{series} are regular polynomials).

Finally, if $q \in \hh \setminus \overline{U(x_0+y_0\s,R)}$ then $|(q-x_0)^2+y_0^2| = r^2$ for some $r>R$. Reasoning as before, we get $|P_{2n}(q) a_{2n}| \geq r^{2n} |a_{2n}|$ and 
$$|P_{2n+1}(q) a_{2n+1}| \geq r^{2n} \left(\sqrt{r^2+y_0^2}-y_0\right) |a_{2n+1}|.$$ Thus, $\sum_{n \in \nn}P_n(q) a_n$ dominates a number series $\sum_{n \in \nn} C_n$ with 
$$\limsup_{n \to +\infty} |C_n|^{1/n} = r/R>1$$ 
and it must diverge.
\end{proof}

\begin{figure}[h]
\begin{center}
\includegraphics[width=.5\textwidth]{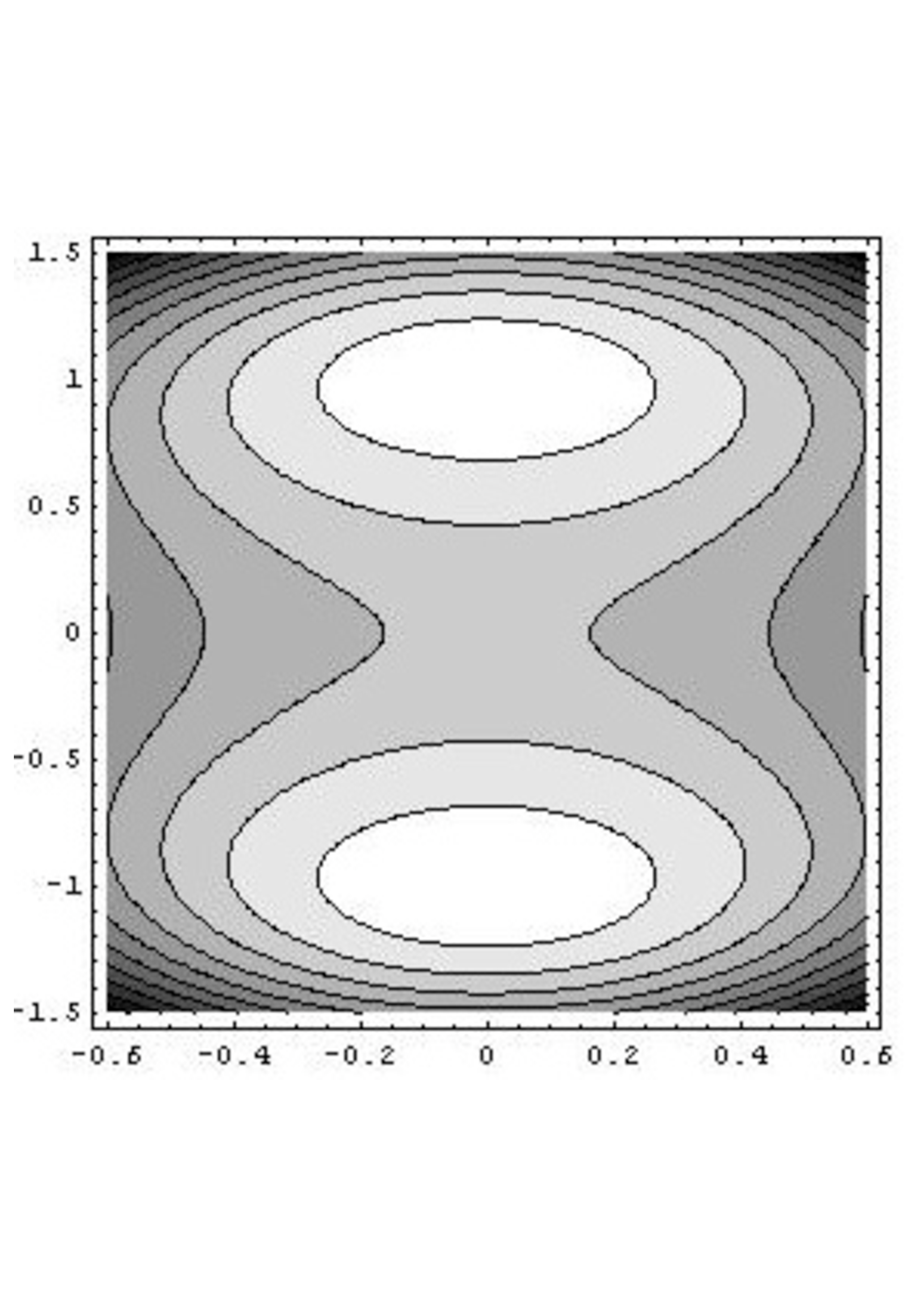}
\end{center}
\caption{Examples of $U(\s,R)$, intersected with a plane $L_I$, for different values of $R$.}
\end{figure}

Let us now describe the sets $U(x_0+y_0\s,R)$. The intersection between the hypersurface $|(q-x_0)^2+y_0^2| = R^2$ and any complex plane $L_I$ is a polynomial lemniscate, connected for $R\geq y_0$ and having two connected components for $R<y_0$. If $y_0\neq 0$ then at the critical value $R=y_0$ the lemniscate is of figure-eight type. Some examples are portrayed in figure 1. 

\begin{remark}
For $R>y_0$ the set $U(x_0+y_0\s,R)$ is a symmetric slice domain; for $0<R\leq y_0$ the set $U(x_0+y_0\s,R)$ is a symmetric domain whose intersection with any complex plane $L_I$ has two connected components.
\end{remark}

So far, we have introduced the formal expansion \eqref{expansion}, and we have studied what its set of convergence would be if estimates for the coefficients $A_n$ were provided. In the next section, we will search exactly for such estimates by means of Cauchy-type integral representations.


\section{Integral formulas and Cauchy estimates}\label{sectionintegral}

Let us recall the ``slicewise'' Cauchy integral formula proven in \cite{advances}. We first introduce some notations. Suppose ${\gamma_I} : [0,1] \to L_I$ to be a rectifiable curve whose support lies in a complex plane $L_I$ for some $I \in \s$, let $\Gamma_I$ be a neighborhood of ${\gamma_I}$ in $L_I$ and let $f,g : \Gamma_I \to \hh$ be continuous functions. If $J \in \s$ is such that $J \perp I$ then $L_I + L_IJ = \hh =L_I + J L_I$ and there exist continuous functions $F,G,H,K:\Gamma_I \to L_I$ such that $f = F+GJ$ and $g = H+JK$ in $\Gamma_I$. Then
$$\int_{\gamma_I} g(s) ds f(s) := \int_{\gamma_I} H(s) ds F(s) +  \int_{\gamma_I} H(s) dsG(s) J+$$
$$ +  J \int_{\gamma_I} K(s)ds F(s)+ J \int_{\gamma_I} K(s)ds G(s)J.$$
The aforementioned Cauchy-type formula reads as follows.

\begin{lemma}
Let $f$ be a regular function on a symmetric slice domain $\Omega$, let $I \in \s$ and let $U_I$ be a bounded Jordan domain in $L_I$, with $\overline{U_I} \subset \Omega_I$. If $\partial U_I$ is rectifiable then 
\begin{equation}
f(z) = \frac{1}{2 \pi I} \int_{\partial U_I} \frac{ds}{s-z} f(s)
\end{equation}
for all $z \in U_I$.
\end{lemma}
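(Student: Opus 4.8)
The plan is to reduce this quaternionic statement to two applications of the classical complex Cauchy integral formula, exploiting the fact that the integrand $g(s) = (s-z)^{-1}$ takes values in the commutative field $L_I \simeq \cc$. First I would fix $J \in \s$ with $J \perp I$, so that $\hh = L_I \oplus L_I J$, and write the restriction $f_I$ in the form $f_I = F + G J$ with $F,G : \Omega_I \to L_I$. Since the function $s \mapsto (s-z)^{-1}$ lies entirely in $L_I$, in the notation $g = H + JK$ used to define the slicewise integral we have $H(s) = (s-z)^{-1}$ and $K \equiv 0$. The defining formula for $\int_{\partial U_I} g(s)\,ds\,f(s)$ therefore collapses to its first two terms, giving
\[
\frac{1}{2\pi I}\int_{\partial U_I}\frac{ds}{s-z}\,f(s) = \frac{1}{2\pi I}\int_{\partial U_I}\frac{ds}{s-z}\,F(s) + \left(\frac{1}{2\pi I}\int_{\partial U_I}\frac{ds}{s-z}\,G(s)\right)J.
\]

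Next I would verify that $F$ and $G$ are holomorphic as $L_I$-valued (equivalently $\cc$-valued) functions on $\Omega_I$. This is where regularity enters: the hypothesis $\bar\partial_I f \equiv 0$ reads $(\partial_x + I\partial_y)f_I = 0$, and substituting $f_I = F + GJ$ yields $(\partial_x F + I\partial_y F) + (\partial_x G + I\partial_y G)J = 0$. Because $F,G$ and their partial derivatives take values in $L_I$, while the second summand lies in $L_I J$, the direct sum decomposition forces both components to vanish separately:
\[
\partial_x F + I\partial_y F = 0, \qquad \partial_x G + I\partial_y G = 0.
\]
These are exactly the Cauchy--Riemann equations under the identification $L_I \simeq \cc$ sending $I \mapsto i$, so $F$ and $G$ are genuinely holomorphic on $\Omega_I$.

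With holomorphy established, I would apply the classical Cauchy integral formula on the bounded Jordan domain $U_I$ (whose boundary is rectifiable by hypothesis, and with $\overline{U_I} \subset \Omega_I$ guaranteeing holomorphy up to the boundary) to each of $F$ and $G$. Since everything now happens inside the commutative field $L_I$, the ordering of $(s-z)^{-1}$ and $F(s)$ (respectively $G(s)$) is immaterial, and the classical formula gives
\[
\frac{1}{2\pi I}\int_{\partial U_I}\frac{ds}{s-z}\,F(s) = F(z), \qquad \frac{1}{2\pi I}\int_{\partial U_I}\frac{ds}{s-z}\,G(s) = G(z).
\]
Recombining, the right-hand side of the first display becomes $F(z) + G(z)J = f_I(z) = f(z)$, which is the desired conclusion.

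The only genuinely delicate point is the separation of the Cauchy--Riemann equations for $F$ and $G$: one must check that left multiplication by $I$ interacts correctly with the splitting $\hh = L_I \oplus L_I J$. This hinges on the commutativity of $L_I$ (so that $I F = F I$ for any $L_I$-valued $F$) together with the associativity relation $I(GJ) = (IG)J$, which keeps the $I$-twisted derivative of the $L_I J$-component inside $L_I J$. Once this bookkeeping is in place the argument is routine, the remaining ingredients being only the standard complex Cauchy formula and the commutativity of $L_I$ that removes all ordering ambiguities.
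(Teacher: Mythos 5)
Your proof is correct. Note that the paper does not prove this lemma itself --- it recalls it from \cite{advances} --- and your argument is essentially the original one: the splitting $f_I = F + GJ$ into $L_I$-valued components, the observation that regularity forces $F$ and $G$ separately to satisfy the Cauchy--Riemann equations on $\Omega_I$ (the ``Splitting Lemma'' of \cite{advances}), and the classical Cauchy integral formula applied to each component, with the $L_I$-valued kernel $(s-z)^{-1}$ (so $H=(s-z)^{-1}$, $K\equiv 0$) collapsing the slicewise integral to its first two terms.
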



The previous lemma was the basis for the proof of a more complete Cauchy formula in \cite{cauchy}. It will now provide a new integral representation and lead us to the desired estimates for the coefficients $A_n$ in the formal expansion \eqref{expansion}.

\begin{theorem}
Let $f$ be a regular function on a symmetric slice domain $\Omega$, let $I \in \s$ and let $U_I$ be a symmetric bounded Jordan domain in $L_I$, with $\overline{U_I} \subset \Omega_I$. If $\partial U_I$ is rectifiable then for each $z_0 = x_0+Iy_0 \in U_I$ and for all $z \in U_I$
\begin{equation}\label{prima}
f(z) = f(z_0) + (z-z_0) \cdot \frac{1}{2 \pi I} \int_{\partial U_I} \frac{ds}{(s-z)(s-z_0)} f(s).
\end{equation}
Furthermore,
\begin{eqnarray}
(R_{\bar z_0}R_{z_0})^{n}f(z) &=& \frac{1}{2 \pi I} \int_{\partial U_I} \frac{ds}{(s-z)[(s-x_0)^2+y_0^2]^n} f(s) \label{seconda}\\
R_{z_0}(R_{\bar z_0}R_{z_0})^{n}f(z) &=& \frac{1}{2 \pi I} \int_{\partial U_I} \frac{ds}{(s-z)(s-z_0)[(s-x_0)^2+y_0^2]^{n}} f(s) \label{terza}
\end{eqnarray}
for all $n \in \nn$ and all $z \in U_I$.
\end{theorem}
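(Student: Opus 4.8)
The plan is to obtain all three formulas from the slicewise Cauchy formula of the preceding lemma by one–variable manipulations carried out inside the commutative plane $L_I$, in which $s,z,z_0,\bar z_0$ all lie. Two preliminary remarks set the stage: the symmetry of $U_I$ guarantees that $\bar z_0 \in U_I$ whenever $z_0 \in U_I$ (so that the reductions at both $z_0$ and $\bar z_0$ are legitimate and none of the kernels is singular on $\partial U_I$), and in $L_I$ one has the factorization $(s-z_0)(s-\bar z_0) = s^2 - 2x_0 s + x_0^2 + y_0^2 = (s-x_0)^2 + y_0^2$, the identity that will merge the poles at $z_0$ and $\bar z_0$. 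To prove \eqref{prima} I would start from $f(z) = \frac{1}{2\pi I}\int_{\partial U_I}\frac{ds}{s-z}f(s)$ and insert the partial–fraction identity $\frac{1}{s-z} = \frac{1}{s-z_0} + (z-z_0)\frac{1}{(s-z)(s-z_0)}$, which is legitimate because every factor lies in the commutative field $L_I$. The summand with $\frac{1}{s-z_0}$ integrates to $f(z_0)$ by Cauchy's formula evaluated at $z_0$, while the constant $z-z_0 \in L_I$ can be extracted on the left of the second integral; this extraction is immediate from the definition of the slice integral, since left multiplication by an element of $L_I$ commutes with the ordinary $L_I$-valued line integrals into which that definition decomposes.

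The engine for \eqref{seconda} and \eqref{terza} is a reduction lemma isolating the effect of a single application of $R_w$ on an integral representation. Suppose a regular function $h$ on $\Omega$ satisfies $h(z) = \frac{1}{2\pi I}\int_{\partial U_I}\frac{\phi(s)}{s-z}\,ds\,f(s)$ on $U_I$ for some continuous $\phi:\partial U_I \to L_I$, and let $w \in \{z_0,\bar z_0\}$. Repeating the computation above with the central factor $\phi(s)$ carried along gives $h(z) = h(w) + (z-w)\frac{1}{2\pi I}\int_{\partial U_I}\frac{\phi(s)}{(s-z)(s-w)}\,ds\,f(s)$. On the other hand, $R_w h$ is by definition the regular function with $h(q) = h(w) + (q-w)*R_w h(q)$, and since $w \in L_I$ the $*$-multiplication by the linear factor $q-w$ reduces on the slice $L_I$ to ordinary pointwise multiplication (the left factor being $L_I$-valued there), so that $h(z) = h(w) + (z-w)R_w h(z)$ for $z \in L_I$. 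Comparing the two displays and cancelling $z-w$ for $z \neq w$ yields $R_w h(z) = \frac{1}{2\pi I}\int_{\partial U_I}\frac{\phi(s)}{(s-z)(s-w)}\,ds\,f(s)$, the value at $z=w$ following from continuity of both sides in $z$.

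With this lemma I would establish \eqref{seconda} and \eqref{terza} simultaneously by induction on $n$. The base case $n=0$ is Cauchy's formula itself for \eqref{seconda}, and the lemma applied to $h=f$, $\phi\equiv 1$, $w=z_0$ for \eqref{terza}. For the inductive step, applying the lemma at $w=\bar z_0$ to $h = R_{z_0}(R_{\bar z_0}R_{z_0})^n f$, whose representation is the inductive form of \eqref{terza} with $\phi(s) = [(s-z_0)((s-x_0)^2+y_0^2)^n]^{-1}$, introduces a factor $\frac{1}{s-\bar z_0}$ which combines with the existing $\frac{1}{s-z_0}$ through $(s-z_0)(s-\bar z_0) = (s-x_0)^2+y_0^2$ to give exactly \eqref{seconda} for $n+1$; applying the lemma at $w=z_0$ to that result then reinserts the factor $\frac{1}{s-z_0}$ and produces \eqref{terza} for $n+1$. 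At every stage each iterate of $R_{z_0}$ and $R_{\bar z_0}$ is again regular on $\Omega$, so the hypotheses of the lemma remain satisfied.

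I expect the main obstacle to be the careful bookkeeping of non-commutativity rather than any deep difficulty: one must consistently keep the $L_I$-valued kernels to the left of $ds$ and $f(s)$ to the right, and justify both the extraction of left constants from the slice integral and — the genuinely structural point — the reduction of $(q-w)*$ to an honest pointwise product on $L_I$, which is what makes $R_w h(z) = (z-w)^{-1}[h(z)-h(w)]$ on the slice and thereby identifies the difference quotient arising from the partial fractions with $R_w h$. The continuity argument at $z=w$ and the verification that $z_0,\bar z_0$ stay in the interior of $U_I$ are the remaining routine points.
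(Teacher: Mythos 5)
Your proposal is correct and takes essentially the same route as the paper's proof: partial fractions carried out inside the commutative plane $L_I$, the identity $(s-z_0)(s-\bar z_0)=(s-x_0)^2+y_0^2$ to merge the poles at $z_0$ and $\bar z_0$, and an induction that alternates between the representation of $(R_{\bar z_0}R_{z_0})^{n}f$ and that of $R_{z_0}(R_{\bar z_0}R_{z_0})^{n}f$. Your only departure is organizational: you package the paper's inline computations into a single reduction lemma with a generic $L_I$-valued kernel $\phi$, and you make explicit two points the paper uses tacitly, namely the extraction of left $L_I$-constants from the slice integral and the slice identity $R_w h(z)=(z-w)^{-1}[h(z)-h(w)]$ coming from the reduction of the $*$-product to a pointwise product on $L_I$.
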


\begin{proof}
The previous lemma is equivalent to \eqref{seconda} in the special case $n=0$. That immediately implies 
$$R_{z_0}f(z)=(z-z_0)^{-1}[f(z) - f(z_0)]=$$
$$= \frac{1}{2 \pi I} \int_{\partial U_I} \frac{1}{z-z_0}\left[ \frac{1}{s-z} - \frac{1}{s-z_0}\right]ds f(s)=$$
$$=\frac{1}{2 \pi I} \int_{\partial U_I} \frac{ds}{(s-z)(s-z_0)} f(s),$$
that is \eqref{terza} in the special case $n=0$. This last equality, in turn, implies \eqref{prima}.
Moreover,
$$R_{z_0}f(\bar z_0)= \frac{1}{2 \pi I} \int_{\partial U_I} \frac{ds}{(s-x_0)^2+y_0^2} f(s)$$
and
$$R_{\bar z_0}R_{z_0}f(z) = (z-\bar z_0)^{-1}[R_{z_0}f(z) - R_{z_0}f(\bar z_0)]=$$
$$= \frac{1}{2 \pi I} \int_{\partial U_I} \frac{1}{z-\bar z_0}\left[ \frac{1}{(s-z)(s-z_0)} - \frac{1}{(s-x_0)^2+y_0^2}\right]ds f(s)=$$
$$= \frac{1}{2 \pi I} \int_{\partial U_I} \frac{1}{z-\bar z_0}\frac{s-\bar z_0 - (s-z)}{(s-z)[(s-x_0)^2+y_0^2]}ds f(s)=$$
$$=\frac{1}{2 \pi I} \int_{\partial U_I} \frac{ds}{(s-z)[(s-x_0)^2+y_0^2]} f(s),$$
that is \eqref{seconda} with $n=1$.
An analogous reasoning proves that if \eqref{seconda} holds for $n=k$, then \eqref{terza} holds for $n=k$, which in turn implies \eqref{seconda} for $n=k+1$ completing the proof.
\end{proof}

We are now in a position to get the desired estimates for the coefficients $A_n$ in the formal expansion \eqref{expansion}.

\begin{corollary}\label{estimates}
Let $f$ be a regular function on a symmetric slice domain $\Omega$, let $x_0+y_0\s \subset \Omega$ and let $q_0 \in x_0+y_0\s$. Set $A_{2n} = (R_{\bar q_0}R_{q_0})^{n}f(q_0)$ and $A_{2n+1} = R_{q_0}(R_{\bar q_0}R_{q_0})^{n}f(\bar q_0)$. For every $U = U(x_0+y_0\s,R)$ such that $\overline{U} \subset \Omega$, there exists a constant $C>0$ such that
\begin{equation}
|A_n| \leq C \cdot \frac{\max_{\partial U}|f|}{R^{n}}
\end{equation}
for all $n \in \nn$.
\end{corollary}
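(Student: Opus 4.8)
The plan is to obtain $A_n$ directly from the integral representations \eqref{seconda} and \eqref{terza} of the previous theorem, evaluated at $q_0$ and at $\bar q_0$ respectively, and then to estimate the resulting contour integrals by exploiting that $|(s-x_0)^2+y_0^2|$ is constantly equal to $R^2$ on the contour of integration.

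First I would fix the imaginary unit $I \in \s$ for which $q_0 = x_0+Iy_0$ and set $U_I = U \cap L_I$. Since $\overline{U}\subset\Omega$ we have $\overline{U_I}\subset\Omega_I$; the set $U_I$ is symmetric, its boundary is the rectifiable level curve $\{s \in L_I : |(s-x_0)^2+y_0^2|=R^2\}$, and both $q_0$ and $\bar q_0$ lie in $U_I$. Applying \eqref{seconda} with $z=z_0=q_0$ gives
$$A_{2n} = \frac{1}{2\pi I}\int_{\partial U_I}\frac{ds}{(s-q_0)\,[(s-x_0)^2+y_0^2]^n}\,f(s),$$
while applying \eqref{terza} with $z=\bar z_0=\bar q_0$ and using $(s-\bar q_0)(s-q_0)=(s-x_0)^2+y_0^2$ in the commutative plane $L_I$ gives
$$A_{2n+1} = \frac{1}{2\pi I}\int_{\partial U_I}\frac{ds}{[(s-x_0)^2+y_0^2]^{n+1}}\,f(s).$$

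Next I would carry out the estimates. On $\partial U_I$ one has $|(s-x_0)^2+y_0^2|^n=R^{2n}$, so pulling this factor out and bounding $|f(s)|\le\max_{\partial U}|f|$ (recall $\partial U_I\subset\partial U$ and that $U$ is bounded with $\overline U$ compact) yields
$$|A_{2n}|\le\frac{1}{2\pi R^{2n}}\left(\int_{\partial U_I}\frac{|ds|}{|s-q_0|}\right)\max_{\partial U}|f|,\qquad |A_{2n+1}|\le\frac{\ell}{2\pi R^{2n+2}}\max_{\partial U}|f|,$$
where $\ell$ denotes the length of $\partial U_I$ and the integral $\int_{\partial U_I}|ds|/|s-q_0|$ is finite because $q_0\notin\partial U_I$. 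Writing $R^{-2n-2}=R^{-1}R^{-(2n+1)}$ and taking $C$ to be the larger of $\tfrac{1}{2\pi}\int_{\partial U_I}|ds|/|s-q_0|$ and $\tfrac{\ell}{2\pi R}$, the two inequalities merge into $|A_n|\le C\,\max_{\partial U}|f|/R^{n}$ for every $n\in\nn$.

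The step I expect to require the most care is justifying that \eqref{seconda} and \eqref{terza} apply for all admissible $R$, since these formulas were proved for a symmetric bounded Jordan domain whereas, by the remark above, $U_I$ is a single Jordan domain only when $R>y_0$: for $R=y_0$ it is a figure-eight and for $R<y_0$ it is the disjoint union of two loops, one around $q_0$ and one around $\bar q_0$. I would dispose of this by noting that the slice Cauchy formula, and hence the entire derivation of \eqref{seconda} and \eqref{terza}, extends to a finite disjoint union of positively oriented Jordan domains, because the Cauchy kernel integrated over a loop not enclosing the evaluation point contributes nothing. For $R<y_0$ the two relevant evaluation points $q_0$ and $\bar q_0$ lie in the two distinct loops of $U_I$, so both representations remain valid; for the critical value $R=y_0$ the two loops meet only at the single point $x_0$, which is negligible in the contour integrals once each loop is oriented as the boundary of the region it encloses. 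With this understood, the bound is uniform in $n$ and the corollary follows.
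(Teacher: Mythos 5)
Your proof is correct and takes essentially the same route as the paper's: apply \eqref{seconda} at $z=q_0$ and \eqref{terza} at $z=\bar q_0$, use that $|(s-x_0)^2+y_0^2|=R^2$ on $\partial U_I$ together with $(s-\bar q_0)(s-q_0)=(s-x_0)^2+y_0^2$ in $L_I$, and bound $|f|$ by $\max_{\partial U}|f|$ --- the only cosmetic difference being that the paper bounds $|s-q_0|\geq\sqrt{R^2+y_0^2}-y_0$ via the lemma of Section 2 to get an explicit constant, where you keep the finite integral $\int_{\partial U_I}|ds|/|s-q_0|$. Your final paragraph on the case $R\le y_0$, where $U_I$ is not a single Jordan domain, addresses a point the paper's own proof passes over in silence, and your resolution (the slicewise Cauchy formula extends to finite unions of positively oriented Jordan domains, since the kernel is holomorphic inside any loop not containing the evaluation point) is the right one.
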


\begin{proof}
If we choose $U = U(x_0+y_0\s,R)$ then the previous theorem implies
$$|(R_{\bar q_0}R_{q_0})^{n}f(q_0)| = \left| \frac{1}{2 \pi I} \int_{\partial U_I} \frac{ds}{(s-z_0)[(s-x_0)^2+y_0^2]^n} f(s)\right| \leq$$
$$\leq \frac{1}{2 \pi} \int_{\partial U_I} \frac{|f(s)|}{\left(\sqrt{R^2+y_0^2}-y_0\right)R^{2n}} d|s| \leq  C \cdot \frac{\max_{\partial U}|f|}{R^{2n}}$$
if we set $C=\frac{length(\partial U_I)}{2 \pi \left(\sqrt{R^2+y_0^2}-y_0\right)}$.
On the other hand
$$|R_{q_0}(R_{\bar q_0}R_{q_0})^{n}f(\bar q_0)| = \left| \frac{1}{2 \pi I} \int_{\partial U_I} \frac{ds}{[(s-x_0)^2+y_0^2]^{n+1}} f(s)\right| \leq$$
$$\leq \frac{1}{2 \pi} \int_{\partial U_I} \frac{|f(s)|}{R^{2n+2}} d|s| \leq K \cdot \frac{\max_{\partial U}|f|}{R^{2n+1}}$$
where $K= \frac{length(\partial U_I)}{2\pi R}\leq C$.
\end{proof}


\section{A new notion of analyticity}\label{sectionanalytic}

We are now ready to prove the desired result concerning the expansion of slice regular functions.

\begin{theorem}
Let $f$ be a regular function on a symmetric slice domain $\Omega$, and let $x_0,y_0 \in \rr$ and $R >0$ be such that $U(x_0+y_0\s,R) \subseteq \Omega$. For all $q_0 \in x_0+y_0\s$, setting $A_{2n} = (R_{\bar q_0}R_{q_0})^{n}f(q_0)$ and $A_{2n+1} = R_{q_0}(R_{\bar q_0}R_{q_0})^{n}f(\bar q_0)$, we have that
\begin{equation}\label{expansion2}
f(q) = \sum_{n \in \nn}[(q-x_0)^2+y_0^2]^n [A_{2n} + (q-q_0)A_{2n+1}]
\end{equation}
for all $q \in U(x_0+y_0\s,R)$.
\end{theorem}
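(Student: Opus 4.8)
The plan is to upgrade the formal identity behind \eqref{expansion2} into an exact one carrying a controllable remainder, and then to show the remainder vanishes. Iterating the theorem of Section \ref{sectionseries} (the one expressing $f$ via $R_{q_0}f$ and $R_{\bar q_0}R_{q_0}f$) $N+1$ times gives, for every $q\in\Omega$ and every $N\in\nn$, the exact equality
\[
f(q)=\sum_{n=0}^{N}[(q-x_0)^2+y_0^2]^n\,[A_{2n}+(q-q_0)A_{2n+1}]+[(q-x_0)^2+y_0^2]^{N+1}(R_{\bar q_0}R_{q_0})^{N+1}f(q),
\]
whose last summand I denote $\varrho_N(q)$. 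Since the finite sum is exactly the $N$th partial sum of \eqref{expansion2}, it suffices to prove $\varrho_N(q)\to 0$ for every $q\in U(x_0+y_0\s,R)$. (That the series converges on all of $U(x_0+y_0\s,R)$ is in any case guaranteed by Corollary \ref{estimates} together with Proposition \ref{convergence}, applied on the exhausting sets $U(x_0+y_0\s,R')$ with $R'<R$; but it is the remainder estimate that forces the limit to equal $f$.)

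First I would control $g_{N+1}:=(R_{\bar q_0}R_{q_0})^{N+1}f$ on the slice $L_I$. Fix $q$ and set $v=|(q-x_0)^2+y_0^2|<R^2$, then choose $\rho$ with $\sqrt v<\rho<R$. The compact sublevel set $E_\rho=\{s\in L_I:|(s-x_0)^2+y_0^2|\le\rho^2\}$ is contained in $U(x_0+y_0\s,R)_I\subseteq\Omega_I$, and I would enclose it in a symmetric bounded Jordan domain $U_I'$ with $\overline{U_I'}\subset\Omega_I$ and $E_\rho\subset U_I'$, so that $|(s-x_0)^2+y_0^2|>\rho^2$ on $\partial U_I'$. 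Both $q_0$ and any $z=x\pm Iy$ attached to the sphere $x+y\s$ of $q$ lie in $E_\rho\subset U_I'$, so the integral representation \eqref{seconda} with this contour yields
\[
\bigl|g_{N+1}(z)\bigr|\le\frac{\mathrm{length}(\partial U_I')\,\max_{\partial U_I'}|f|}{2\pi\,\mathrm{dist}(z,\partial U_I')\,\rho^{2(N+1)}}=:\frac{C_z}{\rho^{2(N+1)}}.
\]

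Next I would pass to an arbitrary $q=x+Jy\in U(x_0+y_0\s,R)$. Since $g_{N+1}$ is regular on the symmetric slice domain $\Omega$ and $x+y\s\subset\Omega$, the Representation Formula (Theorem \ref{representationformula}) writes $g_{N+1}(q)=\alpha\,g_{N+1}(x+Iy)+\beta\,g_{N+1}(x-Iy)$, where $\alpha,\beta$ depend only on $q,x,y,I$ and \emph{not} on $N$. As $|(x\pm Iy-x_0)^2+y_0^2|=|(q-x_0)^2+y_0^2|=v$, the two slice values are bounded by the previous step, so $|g_{N+1}(q)|\le C_q\,\rho^{-2(N+1)}$ with $C_q$ independent of $N$. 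Consequently
\[
|\varrho_N(q)|=v^{N+1}\,|g_{N+1}(q)|\le C_q\,(v/\rho^2)^{N+1}\to 0\quad(N\to\infty),
\]
because $v<\rho^2$, and \eqref{expansion2} follows.

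The step I expect to be the main obstacle is the contour construction feeding the estimate, specifically in the regime $R\le y_0$. There $U(x_0+y_0\s,R)_I$, and each sublevel set $E_\rho$ with $\rho\le y_0$, splits into two components around $q_0$ and $\bar q_0$, so the symmetric Jordan domain required by \eqref{seconda} cannot simply be taken to be a sublevel set of $|(s-x_0)^2+y_0^2|$; it must instead be produced by joining the two components through the connected domain $\Omega_I$ while keeping $|(s-x_0)^2+y_0^2|>\rho^2$ along the whole of $\partial U_I'$. Once that contour is available the Cauchy-type estimate is routine, and the off-slice extension through Theorem \ref{representationformula} is immediate precisely because its coefficients do not depend on $N$.
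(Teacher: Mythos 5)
Your proposal is correct, and it takes a genuinely different route from the paper's proof. The paper argues in two qualitative steps: it first obtains convergence of the series on $U(x_0+y_0\s,R)$ from Proposition \ref{convergence} together with the coefficient estimates of Corollary \ref{estimates}, and then identifies the sum with $f$ via the identity principle: on each slice $L_I$, both the exact remainder $[(z-x_0)^2+y_0^2]^N(R_{\bar q_0}R_{q_0})^N f_I(z)$ and the tail of the series are divisible by $[z-(x_0+Iy_0)]^N$, so the difference $g_I$ vanishes to infinite order at $x_0+Iy_0$ (and at $x_0-Iy_0$, upon replacing $I$ by $-I$) and must vanish identically; the iterates $(R_{\bar q_0}R_{q_0})^N f$ are never estimated except through the coefficients $A_n$. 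You instead prove everything in one quantitative stroke: the exact remainder $\varrho_N$ tends to zero pointwise, by bounding $(R_{\bar q_0}R_{q_0})^{N+1}f$ on the slice through \eqref{seconda} and transporting the bound off the slice with Theorem \ref{representationformula}, whose coefficients do not depend on $N$ (strictly, that formula gives a four-term combination $\alpha_1 g_{N+1}(q_1)+\alpha_2 g_{N+1}(q_2)+q\beta_1 g_{N+1}(q_1)+q\beta_2 g_{N+1}(q_2)$ rather than two terms, but all four coefficients are $N$-independent, which is all you use). This subsumes convergence and identification simultaneously and yields an explicit geometric rate $(v/\rho^2)^{N+1}$ for the partial sums, which the paper's argument does not provide. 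The price is a heavier reliance on the integral representation, and you have correctly isolated the one real obstacle: for $R\le y_0$ the sublevel sets of $|(s-x_0)^2+y_0^2|$ in $L_I$ are disconnected, hence not Jordan domains. Your corridor repair can be made to work --- $\Omega_I\setminus E_\rho$ is connected, since each component of $E_\rho$ is a compact, connected, non-separating subset of the plane, so the two ovals can be joined by a symmetric tube through a real point of $\Omega_I$ --- but a cleaner fix, which also repairs the identical unaddressed point in the paper's own Corollary \ref{estimates} (there $\partial U_I$ bounds a two-component set whenever $R\le y_0$), is to observe that the slicewise Cauchy formula, and hence \eqref{seconda} and \eqref{terza}, hold verbatim for symmetric domains bounded by finitely many rectifiable Jordan curves: for $z$ in one component, the integral over the boundary of the other component vanishes by Cauchy's theorem. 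With that observation you can integrate directly over the two ovals $\{s\in L_I:|(s-x_0)^2+y_0^2|=\rho'^2\}$ with $\rho<\rho'<R$, and no corridor is needed.
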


\begin{proof}
Thanks to proposition \ref{convergence} and to corollary \ref{estimates}, the function series in equation \eqref{expansion2} converges in $U=U(x_0+y_0\s,R)$, where it defines a regular function. Let us consider the difference 
$$g(q) = f(q) -\sum_{n \in \nn}[(q-x_0)^2+y_0^2]^n [A_{2n} + (q-q_0)A_{2n+1}].$$ 
By construction, $g(q) = [(q-x_0)^2+y_0^2]^n(R_{\bar q_0}R_{q_0})^{n}f(q)$ for all $n \in \nn$.
For any choice of $I \in \s$ and for all $z \in U_I$, we derive that 
$$g_I(z) = [(z-x_0)^2+y_0^2]^n(R_{\bar q_0}R_{q_0})^{n}f_I(z) = [z-(x_0+Iy_0)]^n h^{[n]}_I(z)$$ 
where $h^{[n]}_I(z) = [z-(x_0-Iy_0)]^n (R_{\bar q_0}R_{q_0})^{n}f_I(z)$ is holomorphic in $U_I$. The identity principle for holomorphic functions of one complex variable implies that $g_I \equiv0$. Since $I$ can be arbitrarily chosen in $\s$, the function $g$ must be identically zero in $U$. This is equivalent to equation \eqref{expansion2}.
\end{proof}

\begin{definition}
Let $f$ be a regular function on a symmetric slice domain $\Omega$. We say that  $f$ is \emph{symmetrically analytic} if it admits at any $q_0 \in \Omega$ an expansion of type \eqref{expansion} valid in a neighborhood of $q_0$.
\end{definition}

The previous theorem, along with proposition \ref{convergence}, proves what follows.

\begin{corollary}
Let $\Omega$ be a symmetric slice domain. A function $f: \Omega \to \hh$ is regular if, and only if, it is symmetrically analytic.
\end{corollary}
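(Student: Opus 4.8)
The plan is to prove the two implications separately, with essentially all of the analytic content already supplied by the results of the preceding sections, so that only some topological bookkeeping remains.

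The reverse implication is immediate: by the definition, a symmetrically analytic function is regular by hypothesis, so there is nothing to prove. (Even if one weakened the definition to require merely that $f$ admit at each point a locally convergent expansion of type \eqref{expansion}, regularity would follow at once from proposition \ref{convergence}, since each local sum is regular, and regularity — being the slicewise vanishing of $\bar\partial_I f$ — is a local property; hence $\bar\partial_I f$ would vanish identically on every $\Omega_I$.)

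For the forward implication, I would fix a regular $f$ on the symmetric slice domain $\Omega$ and an arbitrary point $q_0 = x_0 + I y_0 \in \Omega$. Since $\Omega$ is symmetric, the entire sphere $x_0 + y_0 \s$ lies in $\Omega$. The first step is to produce a radius $R>0$ with $U(x_0 + y_0 \s, R) \subseteq \Omega$: the map $q \mapsto |(q-x_0)^2 + y_0^2|$ is continuous and proper, and it vanishes exactly on the compact sphere $x_0 + y_0 \s$, which is contained in the open set $\Omega$; consequently a sufficiently small sublevel set $\{q : |(q-x_0)^2 + y_0^2| < R^2\}$ is contained in $\Omega$. The second step is to note that $U(x_0 + y_0 \s, R)$ is a genuine Euclidean neighborhood of $q_0$: it is open, being the preimage of an open interval under a continuous map, and it contains $q_0$ because $(q_0 - x_0)^2 + y_0^2 = I^2 y_0^2 + y_0^2 = 0 < R^2$. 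With such an $R$ in hand, I would invoke the expansion theorem just proven, with $A_{2n} = (R_{\bar q_0} R_{q_0})^n f(q_0)$ and $A_{2n+1} = R_{q_0}(R_{\bar q_0} R_{q_0})^n f(\bar q_0)$, to obtain the expansion \eqref{expansion} valid throughout $U(x_0 + y_0 \s, R)$, hence on a neighborhood of $q_0$. As $q_0$ was arbitrary, $f$ is symmetrically analytic.

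The genuinely substantial work — the coefficient estimates of corollary \ref{estimates}, the convergence and sharpness of the expansion in proposition \ref{convergence}, and the identity-principle argument internal to the expansion theorem — is already complete, so the only real obstacle here is the topological step in the forward direction: verifying that the set $U(x_0 + y_0 \s, R)$ is Euclidean-open and collapses onto the sphere $x_0 + y_0 \s$ as $R \to 0^+$, so that a legitimate Euclidean neighborhood of each $q_0$ is available. This is precisely the improvement over theorem \ref{sigmaanaliticita} advertised in the introduction, where the $\sigma$-balls $\Sigma(q_0,R)$ failed to be Euclidean neighborhoods away from the real axis.
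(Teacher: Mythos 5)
Your proof is correct and follows essentially the same route as the paper, which simply cites the expansion theorem of section \ref{sectionanalytic} for the forward implication and proposition \ref{convergence} (together with the way symmetric analyticity is defined) for the reverse one. The extra detail you supply --- the compactness/properness argument producing an $R>0$ with $U(x_0+y_0\s,R)\subseteq\Omega$ --- is a step the paper leaves implicit, and your version of it is sound.
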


We conclude this section reformulating expansion \eqref{expansion2} as follows, thanks to theorem \ref{representationformula}.

\begin{corollary}
Let $f$ be a regular function on a symmetric slice domain $\Omega$, and let $x_0,y_0 \in \rr$ and $R >0$ be such that $U(x_0+y_0\s,R) \subseteq \Omega$. Then, for all $q \in U(x_0+y_0\s,R)$
\begin{equation}\label{expansion3}
f(q) = \sum_{n \in \nn}[(q-x_0)^2+y_0^2]^n [C_{2n} + qC_{2n+1}]
\end{equation}
where the coefficients $C_{n}$ depend only on $f,x_0,y_0$ and can be computed as
\begin{eqnarray}
&&C_{2n} =(q_2-q_1)^{-1} \left[\bar q_1 (R_{\bar q_0}R_{q_0})^{n}f(q_1)-\bar q_2 (R_{\bar q_0}R_{q_0})^{n}f(q_2)\right],\\
&&C_{2n+1} = (q_2-q_1)^{-1} \left[(R_{\bar q_0}R_{q_0})^{n}f(q_2) - (R_{\bar q_0}R_{q_0})^{n}f(q_1)\right]\nonumber
\end{eqnarray}
for all $q_0,q_1,q_2 \in x_0+y_0\s$ with $q_1 \neq q_2$.
\end{corollary}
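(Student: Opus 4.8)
The plan is to derive expansion \eqref{expansion3} directly from expansion \eqref{expansion2}, which has already been established by the previous theorem, by rewriting each degree-$n$ block $[(q-x_0)^2+y_0^2]^n [A_{2n} + (q-q_0)A_{2n+1}]$ in a form that is manifestly independent of the choice of $q_0 \in x_0+y_0\s$. First I would fix $q_0 = x_0+Iy_0$ and observe that the pair $\bigl(A_{2n},A_{2n+1}\bigr) = \bigl((R_{\bar q_0}R_{q_0})^{n}f(q_0),\, R_{q_0}(R_{\bar q_0}R_{q_0})^{n}f(\bar q_0)\bigr)$ is precisely the data needed to describe the restriction of the regular function $g_n := (R_{\bar q_0}R_{q_0})^{n}f$ to the sphere $x_0+y_0\s$: indeed $g_n(q_0) = A_{2n}$, and $R_{q_0}g_n(\bar q_0) = A_{2n+1}$ records the ``$*$-derivative'' datum along that sphere.

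The key step is then to recognize that the second Representation Formula in theorem \ref{representationformula}, applied to the regular function $g_n$ on the sphere $x_0+y_0\s$, says exactly
\begin{equation}\nonumber
g_n(q) = g_n(q_0) + (q-q_0)(q_1-q_2)^{-1}\left[g_n(q_1)-g_n(q_2)\right],
\end{equation}
and that the bracketed combinations appearing there are independent of $q_1,q_2$, depending only on $x_0,y_0$. Comparing this with the block $A_{2n} + (q-q_0)A_{2n+1} = g_n(q_0) + (q-q_0)\,R_{q_0}g_n(\bar q_0)$ identifies $A_{2n+1} = (q_1-q_2)^{-1}[g_n(q_1)-g_n(q_2)]$, so that each block equals $g_n(q_0)+(q-q_0)(q_1-q_2)^{-1}[g_n(q_1)-g_n(q_2)]$. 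I would then rearrange the linear-in-$q$ expression $g_n(q_0)+(q-q_0)(q_1-q_2)^{-1}[g_n(q_1)-g_n(q_2)]$ into the form $C_{2n}+qC_{2n+1}$ by separating the $q$-free part from the coefficient of $q$ on the left; the coefficient of $q$ is plainly $C_{2n+1}=(q_2-q_1)^{-1}[g_n(q_2)-g_n(q_1)]$, and the constant term, after substituting $A_{2n}=g_n(q_0)$ and eliminating $q_0$ by means of the first Representation Formula, becomes $C_{2n}=(q_2-q_1)^{-1}[\bar q_1 g_n(q_1)-\bar q_2 g_n(q_2)]$. Both are among the $q_1,q_2$-independent quantities guaranteed by theorem \ref{representationformula}.

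The cleanest route, which I would actually carry out, bypasses the rearrangement: apply the \emph{first} Representation Formula directly to $g_n$, giving $g_n(q) = (q_2-q_1)^{-1}[\bar q_1 g_n(q_1)-\bar q_2 g_n(q_2)] + q(q_2-q_1)^{-1}[g_n(q_2)-g_n(q_1)]$, and read off $C_{2n}$ and $C_{2n+1}$ as the two bracketed terms. Substituting $g_n=(R_{\bar q_0}R_{q_0})^{n}f$ then yields the stated formulas verbatim. Finally I would multiply through by $[(q-x_0)^2+y_0^2]^n$ and sum over $n$, invoking the already-proven expansion \eqref{expansion2}, to obtain \eqref{expansion3}. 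The only delicate point is the claim that $C_{2n}$ and $C_{2n+1}$ depend only on $f,x_0,y_0$ and not on $q_1,q_2$ (nor on the auxiliary $q_0$); but this is exactly the independence asserted in theorem \ref{representationformula} for the regular function $g_n$, so the main obstacle reduces to checking that $g_n=(R_{\bar q_0}R_{q_0})^{n}f$ is regular on a symmetric slice domain containing $x_0+y_0\s$ and that its value on that sphere does not in fact depend on which $q_0$ we used to define the operators---both of which follow from the regularity of the iterated $R$-operators and the fact that $[(q-x_0)^2+y_0^2]$ is a real-coefficient, hence $q_0$-symmetric, factor.
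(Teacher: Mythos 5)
Your proposal is correct and takes essentially the same route as the paper, which states this corollary without a separate proof, precisely as a reformulation of expansion \eqref{expansion2} obtained by applying theorem \ref{representationformula} to the regular functions $g_n=(R_{\bar q_0}R_{q_0})^{n}f$. The only point to phrase carefully is that the Representation Formula yields $g_n(q)=C_{2n}+qC_{2n+1}$ only for $q\in x_0+y_0\s$, so one cannot literally ``multiply through by $[(q-x_0)^2+y_0^2]^n$ and sum'' (that factor vanishes on the sphere for $n\geq 1$); the substitution into \eqref{expansion2} must instead rest on the block identity $A_{2n}+(q-q_0)A_{2n+1}=C_{2n}+qC_{2n+1}$ valid for all $q$, which is exactly what the coefficient identifications $C_{2n+1}=A_{2n+1}$ and $C_{2n}=A_{2n}-q_0A_{2n+1}$ in your second paragraph provide.
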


We notice that the odd-indexed coefficients in expansions \eqref{expansion2} and \eqref{expansion3} coincide.


\section{Computing multiplicities of zeros}\label{sectionmultiplicities}

As a first application of our new type of series expansions, let us look at its bearings in the computation of the multiplicities of the zeros. After the study of the roots of polynomials of \cite{shapiro}, the following result was proven for power series in \cite{zeros}, and in the following form in \cite{zerosopen} (for generalizations to other real alternative algebras, see \cite{perotti,ghiloni}).

\begin{theorem}[Structure of the Zero Set]
Let $f$ be a regular function on a symmetric slice domain $\Omega$. If $f$ does not vanish identically, then the zero set of $f$ consists of isolated points or isolated 2-spheres of the form $x+y\mathbb{S}$.
\end{theorem}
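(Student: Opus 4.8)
The plan is to reduce the global statement about the zero set in $\Omega$ to a local statement at each $q_0 \in \Omega$, and to use the new series expansion \eqref{expansion2} together with the Representation Formula to analyze zeros on a single $2$-sphere. The key dichotomy to establish is the following: if $q_0 = x_0 + I y_0 \in \Omega$ is a zero of $f$, then \emph{either} $q_0$ is an isolated zero, \emph{or} the entire sphere $x_0 + y_0 \s$ consists of zeros and is isolated from other zeros. Everything will hinge on whether the sphere $x_0 + y_0\s$ is entirely contained in the zero set.

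First I would fix a zero $q_0 = x_0 + I y_0$ and expand $f$ via \eqref{expansion2} in a set $U = U(x_0+y_0\s, R) \subseteq \Omega$, so that $f(q) = \sum_{n \in \nn} [(q-x_0)^2+y_0^2]^n[A_{2n}+(q-q_0)A_{2n+1}]$. Let $m$ be the smallest index with $A_m \neq 0$ (if all $A_n$ vanish then $f \equiv 0$ on $U$, and by the Identity principle $f \equiv 0$ on $\Omega$, contradicting the hypothesis). I would then split into two cases according to the parity of $m$. If $m = 2k$ is even, then $A_{2k} \neq 0$ but possibly $A_{2k+1} = 0$ or not; the leading behavior is governed by the factor $[(q-x_0)^2+y_0^2]^k$ together with the bracket $[A_{2k}+(q-q_0)A_{2k+1}]$. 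If $m=2k+1$ is odd, then $A_{2k}=0$ and the leading term is $[(q-x_0)^2+y_0^2]^k(q-q_0)A_{2k+1}$, which vanishes at $q_0$ but is nonzero elsewhere on the sphere (since $(q-q_0)$ has a single root $q_0$ on $x_0+y_0\s$). This is the crucial structural difference: the scalar factor $[(q-x_0)^2+y_0^2]^k$ vanishes on the whole sphere, whereas $(q-q_0)$ vanishes only at the single point $q_0$.

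The heart of the argument is to show that near $q_0$ the vanishing of $f$ is controlled entirely by these leading polynomial factors. Factoring out the lowest power, I would write $f(q) = [(q-x_0)^2+y_0^2]^k \, \tilde f(q)$ where $\tilde f$ is regular on $U$ with $\tilde f(q_0) = A_{2k}$ and leading bracket $[A_{2k}+(q-q_0)A_{2k+1}]$; here I would use the Representation Formula (theorem \ref{representationformula}) to observe that the restriction of $\tilde f$ to the sphere $x_0+y_0\s$ is affine in $q$, so its zeros on that sphere are either none, exactly one point, or the whole sphere. When $A_{2k} \neq 0$, I would argue that $\tilde f(q_0) \neq 0$ forces $\tilde f$ to be nonvanishing in a Euclidean neighborhood of $q_0$ (by continuity, since $f \in C^\omega$), so the only zeros of $f$ near $q_0$ come from the scalar factor $[(q-x_0)^2+y_0^2]^k = 0$, i.e. the sphere $x_0+y_0\s$ itself — giving an isolated sphere of zeros. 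When $A_{2k}=0$ but $A_{2k+1}\neq 0$, the affine restriction $\tilde f|_{x_0+y_0\s}$ has the single root $q_0$, and combined with the scalar factor this pins the nearby zeros down to the isolated point $q_0$.

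The main obstacle I anticipate is controlling the behavior of $f$ \emph{off} the complex plane $L_I$ and \emph{off} the sphere simultaneously, since the factorization and the affine-restriction argument live on the sphere and on slices, but isolation must be proved in the full Euclidean topology of $\hh$. The delicate point is ruling out sequences of zeros $q_j \to q_0$ that lie on varying spheres $x_j + y_j\s$ approaching $x_0+y_0\s$; here the estimate from the lemma relating $|(q-x_0)^2+y_0^2|$ to $|q-q_0|$ and the uniform convergence of the expansion on compact subsets of $U$ should let me bound $|\tilde f(q) - \tilde f(q_0)|$ and thereby confine the zeros. I would close by noting that distinct zeros or zero-spheres cannot accumulate, since an accumulation point would itself be a zero to which the above dichotomy applies, contradicting the local isolation just established.
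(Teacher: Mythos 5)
Your overall skeleton (reduce to a dichotomy at each sphere meeting the zero set, use the expansion \eqref{expansion2} and the affine restriction from Theorem \ref{representationformula}, then rule out accumulation of components) is the right shape, but note first that the paper itself offers no proof of this theorem: it is quoted from \cite{zeros} and \cite{zerosopen}. Measured against those proofs, your proposal has one genuine gap, and it sits exactly at the point you flag and then wave away. The crux is: if $g$ is regular, $g(p)=0$ for $p\in x_0+y_0\s$, and $g$ does \emph{not} vanish identically on $x_0+y_0\s$, then $p$ is isolated in the zero set of $g$ in the Euclidean topology of $\hh$. Your proposed tools (continuity of the cofactor plus the lemma comparing $|q-p|$ with $|(q-x_0)^2+y_0^2|$) cannot prove this. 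Indeed, write $g(q)=(q-p)B_1+[(q-x_0)^2+y_0^2]h(q)$ with $B_1\neq 0$. Both terms vanish to first order at $p$: the lemma only gives $|(q-x_0)^2+y_0^2|\le |q-p|\,(|q-p|+2y_0)$, and the constant $2y_0$ is not small, so for a zero $q\neq p$ all one gets is $|B_1|\le (|q-p|+2y_0)|h(q)|$, which is no contradiction. More precisely, using $(q-x_0)^2+y_0^2=(q-p)\bigl[(q-p)^{-1}q(q-p)-\bar p\bigr]$, zeros $q_j\to p$ along directions $v_j\to v$ would have to satisfy, in the limit, $B_1=-(v^{-1}pv-\bar p)\,h(p)$; this equation \emph{can} be solvable. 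Concretely, for $g(q)=(q-I)*(q-J)$ and $p=I$ one has $B_1=-(I+J)$, $h\equiv 1$, and the limiting equation becomes $v^{-1}Iv=J$, which has solutions $v$. Yet the zero set of this $g$ is exactly $\{I\}$: what forbids accumulating zeros is the algebraic identity $(q-I)^{-1}q(q-I)=-I$ valid for \emph{every} $q\in\s\setminus\{I\}$, information that is invisible to moduli estimates. So the isolation statement is genuinely algebraic, and some $*$-product input is indispensable: either the characterization ``$f*g(q)=0$ iff $f(q)=0$ or $g\bigl(f(q)^{-1}qf(q)\bigr)=0$'', or (as in \cite{zeros,zerosopen}) the symmetrization $f^s=f*f^c$, which is slice preserving, vanishes wherever $f$ does, and restricts to each $L_I$ as a holomorphic map $\Omega_I\to L_I$ with isolated zeros; combined with the at-most-one-zero-per-sphere consequence of the Representation Formula, that yields the theorem at once.

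There is also a slip in your case analysis. If the first nonvanishing coefficient is $A_{2k+1}$ with $k\ge 1$, the factor $[(q-x_0)^2+y_0^2]^k$ vanishes on the \emph{whole} sphere $x_0+y_0\s$, so the leading term is not ``nonzero elsewhere on the sphere'' and the correct conclusion in that case is an isolated sphere (with $q_0$ carrying positive isolated multiplicity in the sense of Section \ref{sectionmultiplicities}), not an isolated point; only the case $A_0=0\neq A_1$ can produce an isolated point. Symmetrically, in your even case you control zeros only near $q_0$, where $\tilde f(q_0)=A_{2k}\neq 0$; but the affine bracket $A_{2k}+(q-q_0)A_{2k+1}$ may vanish at one other point $p_1$ of the sphere, and near $p_1$ the unresolved crux above reappears. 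Continuity confines zeros near points of the sphere where the cofactor is nonzero; it says nothing at the single point where it vanishes, and that is precisely where the whole difficulty of the theorem lives.
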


The following notion of multiplicity was introduced in \cite{zeros}.

\begin{definition}
Let $f$ be a regular function on a symmetric slice domain $\Omega$ and let $q_0 \in \Omega$. We define the \emph{(classical) multiplicity} of $q_0$ as a zero of $f$ and denote by $m_f(q_0)$ the largest $n \in \nn$ such that
$f(q)=(q-q_0)^{*n}*g(q)$ for some regular $g: \Omega \to \hh$.
\end{definition}

In other words, $m_f(q_0)$ is the index $n$ of the first non vanishing  coefficient $a_n$ in the expansion \eqref{regularseries}. Despite its analogy with complex multiplicity, this notion of multiplicity is not completely satisfactory, since the sum of the classical multiplicities of the zeros of a polynomial is unrelated to its degree.
\begin{example}
Let us choose $I\in \s$ and let
$$P(q) = (q-I)*(q+I) = q^2+1.$$ 
$P$ has multiplicity $m_P(I) = 1$ at all $I \in \s$.
\end{example}

\begin{example}
Let us choose $I,J \in \s$ with $I \neq -J$ and let
$$P(q) = (q-I)*(q-J) = q^2-q(I+J)+IJ.$$
The zero set of $P$ is $\{I\}$. If $I \neq J$ then $m_P(I)=1$, while $f$ has degree $2$.
\end{example}

For this reason \cite{milan} introduced alternative notions of multiplicity for the roots of regular polynomials. We present them as generalized  in \cite{singularities} to all regular functions on symmetric slice domains.

\begin{theorem}\label{factorization}
Let $f$ be a regular function on a symmetric slice domain $\Omega$, suppose $f \not \equiv 0$ and let $x+y\s \subset\Omega$. There exist $m \in \nn$ and a regular function $\tilde f:\Omega \to \hh$ not identically zero in $x+y\s$ such that
\begin{equation}
f(q) = [(q-x)^2+y^2]^m \tilde f(q).
\end{equation}
If $\tilde f$ has a zero $p_1 \in x+y\s$ then such a zero is unique and there exist $n \in \nn$, $p_2,...,p_n \in x+y\s$ (with $p_i \neq \bar p_{i+1}$ for all $i \in \{1,\ldots,n-1\}$) such that
\begin{equation}
\tilde f(q) = (q-p_1)*(q-p_2)*...*(q-p_n)*g(q)
\end{equation}
for some regular function $g:\Omega \to \hh$ which does not have zeros in $x+y\s$.
\end{theorem}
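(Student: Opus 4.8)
The plan is to read the spherical factor $[(q-x)^2+y^2]^m$ straight off the new series expansion~\eqref{expansion2}, and then to peel off the isolated zero by iterating the factorization proposition for single zeros recalled in Section~\ref{sectionseries}. First I fix $q_0=x+Iy\in x+y\s$ and expand $f$ as in~\eqref{expansion2} on some $U=U(x+y\s,R)\subseteq\Omega$. Since $f\not\equiv0$, the Identity Principle forbids the simultaneous vanishing of all coefficients, so there is a least index $m\in\nn$ with $(A_{2m},A_{2m+1})\neq(0,0)$. Collecting the common power gives $f(q)=[(q-x)^2+y^2]^m\,\tilde f(q)$ on $U$, where $\tilde f(q)=\sum_{n\in\nn}[(q-x)^2+y^2]^n\big[A_{2(n+m)}+(q-q_0)A_{2(n+m)+1}\big]$ is regular by Proposition~\ref{convergence}. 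Restricting to the sphere annihilates every term with $n\geq1$, so $\tilde f$ agrees on $x+y\s$ with $A_{2m}+(q-q_0)A_{2m+1}$, which by minimality of $m$ is not identically zero.

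To promote $\tilde f$ to a regular function on all of $\Omega$, I set $\tilde f_I(z)=[(z-x)^2+y^2]^{-m}f_I(z)$ on each slice: the denominator is the real polynomial $z\mapsto(z-x)^2+y^2$, vanishing only at $x\pm Iy$, so $\tilde f_I$ is holomorphic off those two points, while the local series exhibits it as holomorphic there as well; hence $\tilde f$ is regular on $\Omega$, and $m$ is the maximal power of $[(q-x)^2+y^2]$ dividing $f$. Uniqueness of the prospective zero now follows: applying~\eqref{expansion2} to $\tilde f$ at $q_0$ shows $\tilde f$ restricts to the sphere as some $B_0+(q-q_0)B_1$, not identically zero by the previous step. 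If $\tilde f$ vanished at two distinct points $p,p'$ of $x+y\s$, subtracting the two relations would give $(p-p')B_1=0$, whence $B_1=0$ and then $B_0=0$, a contradiction. Thus $\tilde f$ has at most one zero $p_1$ on the sphere.

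If such a $p_1$ exists, the single-zero proposition gives $\tilde f=(q-p_1)*f_1$ with $f_1$ regular on $\Omega$. The same reasoning applies to $f_1$: it cannot vanish on all of $x+y\s$, for otherwise the first part of the proof, applied to $f_1$, would extract a further factor $[(q-x)^2+y^2]$ from it, and since this slice-preserving factor $*$-commutes with everything it would then divide $\tilde f$, contradicting the maximality of $m$. Hence $f_1$ has at most one zero $p_2$ on the sphere, and $p_2\neq\bar p_1$, since $p_2=\bar p_1$ would make $(q-p_1)*(q-\bar p_1)=[(q-x)^2+y^2]$ a factor of $\tilde f$ — again impossible. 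Iterating produces points $p_1,p_2,\dots$ with $p_i\neq\bar p_{i+1}$ and quotients $f_1,f_2,\dots$, and the construction halts at the first quotient $g=f_n$ with no zero on $x+y\s$, yielding exactly the stated factorization.

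The main obstacle is the termination of this iteration: one must exclude an infinite chain of isolated factorizations over the single sphere $x+y\s$. I would settle it through the finiteness guaranteed by the Structure of the Zero Set theorem — concretely, the order of vanishing of the holomorphic restriction $\tilde f_I$ at the slice point $x+Iy$ is finite and bounds the number $n$ of isolated factors — so that some quotient $g$ must be free of zeros on the sphere. Verifying that the central factor $[(q-x)^2+y^2]$ may always be commuted to the front of a $*$-product, and that each factorization step genuinely lowers this order, is the technical heart of the argument; by contrast, the extraction of the spherical multiplicity $m$ is immediate from the new expansion and is precisely the contribution this paper brings to the result.
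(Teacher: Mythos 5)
The statement you were asked to prove is one this paper never proves: Theorem \ref{factorization} is recalled from the cited literature (\cite{milan}, \cite{singularities}), so there is no internal proof to compare against, and your argument has to stand on its own. Its first half does: extracting the least index $m$ with $(A_{2m},A_{2m+1})\neq(0,0)$ from expansion \eqref{expansion2}, dividing slicewise by the central real-coefficient polynomial $[(q-x)^2+y^2]^m$ to get $\tilde f$ regular on all of $\Omega$, using affineness of regular functions on spheres to show $\tilde f$ has at most one zero on $x+y\s$, and excluding $p_{i+1}=\bar p_i$ by commuting the factor $(q-p_i)*(q-\bar p_i)=(q-x)^2+y^2$ to the front --- all of this is correct, non-circular within the paper's logic (the expansion of Section \ref{sectionanalytic} is proved independently of Theorem \ref{factorization}), and it is essentially the paper's remark on spherical multiplicity run in reverse.

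The genuine gap is exactly where you located the ``technical heart'': termination. The invariant you propose --- the order of vanishing of the holomorphic restriction $\tilde f_I$ at the point $x+Iy$ --- does not bound the number $n$ of linear factors, and the paper's own example defeats it: for $P(q)=(q-I)*(q-J)$ with $J\perp I$, the restriction $P_I(z)=z(z-I)-(z-I)J$ vanishes to order exactly $1$ at $z=I$, yet the isolated multiplicity is $n=2$. Worse, the zeros $p_i$ need not lie on the slice you fixed at the outset: $q-J$ has $n=1$ while its restriction to $L_I$ does not vanish at $I$ at all. For the same reason, the claim that each factorization step lowers this order is false, and the Structure of the Zero Set theorem cannot rescue the argument, since $n$ is a multiplicity and not a count of distinct zeros. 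The standard repair (the one used in \cite{singularities}) replaces $\tilde f_I$ by the symmetrization $N(\tilde f)=\tilde f * \tilde f^c$, where $\tilde f^c$ is the regular conjugate: this is a slice-preserving regular function, not identically zero, each factor $(q-p_i)$ with $p_i\in x+y\s$ contributes exactly one central factor $(q-p_i)*(q-\bar p_i)=(q-x)^2+y^2$ to it, and hence $n$ is bounded by the finite order of vanishing of $N(\tilde f)_I$ at $x+Iy$. Note, however, that the conjugate $f^c$ and the symmetrization are nowhere defined in this paper --- they belong to the cited references --- so your proof cannot be closed using only the tools on the page; with that one substitution it goes through.
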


\begin{definition}
In the situation of Theorem \ref{factorization}, we say that $f$ has \emph{spherical multiplicity} $2m$ at $x+y\s$ and that $f$ has \emph{isolated multiplicity} $n$ at $p_1$.
\end{definition}

As observed in \cite{milan}, the degree of a polynomial equals the sum of the spherical multiplicities and of the isolated multiplicities of its zeros. For instance: in the previous example, $q^2+1$ has spherical multiplicity $2$ at $\s$; $(q-I)*(q-J)$ (with $I \neq -J$) has spherical multiplicity $0$ at $\s$ and isolated multiplicity $2$ at $I$.

Our new series expansion allows an immediate computation of the spherical multiplicity and it gives some information on the isolated multiplicity.

\begin{remark}
Let $f$ be a regular function on a symmetric slice domain $\Omega$ and let $q_0=x_0+Iy_0 \in\Omega$. If $A_{2n}$ or $A_{2n+1}$ is the first non vanishing coefficient in the expansion \eqref{expansion2} then $2n$ is the spherical multiplicity of $f$ at $x_0+y_0\s$. Moreover, $q_0$ has positive isolated multiplicity if and only if $A_{2n} =0$. 
\end{remark}

\begin{remark}
Let $f$ be a regular function on a symmetric slice domain $\Omega$ and let $x_0+y_0\s \subset \Omega$. If the first non vanishing coefficient in the expansion \eqref{expansion3} is $C_{2n}$ or $C_{2n+1}$ is  then $2n$ is the spherical multiplicity of $f$ at $x_0+y_0\s$. Moreover, there exists a point with positive isolated multiplicity in $x_0+y_0\s$ if, and only if, $C_{2n+1}^{-1}C_{2n} \in x_0+y_0\s$. 
\end{remark}


\section{Differentiating regular functions}\label{sectiondifferential}

In section \ref{sectionanalytic} we proved that a regular function $f$ on a symmetric slice domain $\Omega$ admits an expansion of the form
$$f(q) = \sum_{n \in \nn}[(q-x_0)^2+y_0^2]^n [A_{2n} + (q-q_0)A_{2n+1}]$$
at each $q_0\in \Omega$. This new expansion allows the computation of the real partial derivatives of $f$.

\begin{theorem}
Let $f$ be a regular function on a symmetric slice domain $\Omega$, and let $q_0 = x_0+Iy_0 \in \Omega$. For all $v \in \hh, |v|=1$ the derivative of $f$ along $v$ can be computed at $q_0$ as
\begin{equation}\label{differential}
\lim_{t\to 0} \frac{f(q_0+tv)-f(q_0)}{t} = v A_1 + (q_0v - v\bar q_0)A_2
\end{equation}
where $A_1=R_{q_0}f(\bar q_0),A_2 = R_{\bar q_0}R_{q_0}f(q_0)$. In particular if $e_0,e_1,e_2,e_3 \in \hh$ form a basis for $\hh$ and if $x_0,x_1,x_2,x_3$ denote the corresponding coordinates, then
\begin{equation}
\frac{\partial f}{\partial x_i}(q_0) = e_i R_{q_0}f(\bar q_0) + (q_0e_i - e_i\bar q_0)R_{\bar q_0}R_{q_0}f(q_0)
\end{equation}
\end{theorem}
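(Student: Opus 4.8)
The plan is to exploit the series expansion \eqref{expansion2} from section \ref{sectionanalytic}, isolating its first two terms and absorbing the remainder into a single regular (hence continuous) function, in close analogy with the complex case. Since $\Omega$ is a symmetric slice domain containing $q_0$, by symmetry it contains the whole sphere $x_0+y_0\s$; as $\Omega$ is open and this sphere is compact, I may choose $R>0$ with $U(x_0+y_0\s,R)\subseteq\Omega$. Then $q_0$ lies in the Euclidean interior of $U(x_0+y_0\s,R)$, because $|(q_0-x_0)^2+y_0^2|=0<R^2$ and $U$ is defined by a strict inequality between continuous quantities. Hence $q_0+tv\in U(x_0+y_0\s,R)$ for all sufficiently small $t$ and every direction $v$, so the directional derivative is well defined.

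First I would regroup the expansion. Using $A_0=f(q_0)$, I write
\begin{equation*}
f(q) = f(q_0) + (q-q_0)A_1 + [(q-x_0)^2+y_0^2]\,h(q),
\end{equation*}
where $h(q)=\sum_{n\in\nn}[(q-x_0)^2+y_0^2]^n[A_{2n+2}+(q-q_0)A_{2n+3}]$. The shifted coefficient sequence has the same reciprocal radius, so by proposition \ref{convergence} the series defining $h$ converges absolutely and uniformly on compact subsets of $U(x_0+y_0\s,R)$ and defines a regular, hence continuous, function there. Evaluating at $q_0$ annihilates every term with $n\geq 1$, since $(q_0-x_0)^2+y_0^2=(Iy_0)^2+y_0^2=0$; this leaves $h(q_0)=A_2$.

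Next I would substitute $q=q_0+tv$ and compute the difference quotient. A direct expansion gives
\begin{equation*}
(q_0+tv-x_0)^2+y_0^2 = (Iy_0+tv)^2+y_0^2 = t\,y_0(Iv+vI)+t^2v^2,
\end{equation*}
so that
\begin{equation*}
\frac{f(q_0+tv)-f(q_0)}{t} = vA_1 + \left[y_0(Iv+vI)+tv^2\right] h(q_0+tv).
\end{equation*}
Letting $t\to0$ and using continuity of $h$ at $q_0$ together with $h(q_0)=A_2$ yields the limit $vA_1+y_0(Iv+vI)A_2$. Finally, the identity $q_0v-v\bar q_0=(x_0+Iy_0)v-v(x_0-Iy_0)=y_0(Iv+vI)$ rewrites this as $vA_1+(q_0v-v\bar q_0)A_2$, which is precisely \eqref{differential}.

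The computation above never uses $|v|=1$, so it in fact holds for arbitrary $v\in\hh$; in particular the map $v\mapsto\lim_{t\to0}t^{-1}[f(q_0+tv)-f(q_0)]$ is $\rr$-linear, each summand being linear in $v$. Evaluating it on a basis vector $e_i$ gives the stated formula for $\partial f/\partial x_i(q_0)$. I expect the only delicate point to be the interchange of the limit with the infinite sum; the regrouping handles this cleanly, reducing the whole matter to the continuity of the single regular function $h$ at $q_0$, which is guaranteed by the uniform convergence asserted in proposition \ref{convergence}.
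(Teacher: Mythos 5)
Your proof is correct and follows essentially the same route as the paper's: both start from the expansion \eqref{expansion2} at $q_0$, substitute $q=q_0+tv$ using $(q_0+tv-x_0)^2+y_0^2=t(tv^2+q_0v-v\bar q_0)=t\left[y_0(Iv+vI)+tv^2\right]$, and pass to the limit in $t$. The only difference is in bookkeeping: where the paper absorbs the tail of the series into an asserted $o(t)$ term, you regroup it into the single continuous function $h$ with $h(q_0)=A_2$, which simply makes that remainder estimate explicit.
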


\begin{proof}
We observe that
$$(q-x_0)^2+y_0^2 = (q-q_0)*(q-\bar q_0)=q(q-q_0) -(q-q_0)\bar q_0$$
and for $q = q_0+tv$
$$(q-x_0)^2+y_0^2= (q_0+tv)tv-tv\bar q_0 = t(tv^2+q_0v-v\bar q_0)$$
whence
$$f(q_0+tv) =  \sum_{n \in \nn}t^n(tv^2+q_0v-v\bar q_0)^n [A_{2n} + tvA_{2n+1}].$$
Hence,
$${f(q_0+tv)-f(q_0)}= tA_1 + t(tv^2+q_0v-v\bar q_0)[A_2 + tv A_3] +o(t)$$
and the thesis immediately follows.
\end{proof}

We notice that similar reasonings allow the computation of higher order derivatives. Furthermore, we make the following observation.

\begin{remark}
Let $f$ be a regular function on a symmetric slice domain $\Omega$ and let $q_0 \in \Omega$. If $v$ lies in the same $L_I$ as $q_0$, then $v$ commutes with $q_0$ and
$$v R_{q_0}f(\bar q_0) + (q_0v - v\bar q_0)R_{\bar q_0}R_{q_0}f(q_0) = v [R_{q_0}f(\bar q_0) + (q_0-\bar q_0)R_{\bar q_0}R_{q_0}f(q_0)] =$$
$$=  v [R_{q_0}f(\bar q_0) + R_{q_0}f(q_0)- R_{q_0}f(\bar q_0)] = vR_{q_0}f(q_0).$$
On the other hand, if $q_0 = x_0+Iy_0$ with $y_0\neq 0$ and $v$ is tangent to the $2$-sphere $x_0+y_0\s$ at $q_0$, then $q_0 v=  v\bar q_0$ and
$$v R_{q_0}f(\bar q_0) + (q_0v - v\bar q_0)R_{\bar q_0}R_{q_0}f(q_0) = vR_{q_0}f(\bar q_0).$$
\end{remark}

As a consequence, the notions of derivatives of regular functions already introduced in literature can be recovered as special cases of formula \eqref{differential}. Indeed, $R_{q_0}f(q_0)$ coincides with $\partial_cf(q_0)$, where $\partial_c f$ denotes the \emph{Cullen (or slice) derivative} of $f$, defined in \cite{advances} to equal
$$\partial_I f (x+Iy) = \frac{1}{2} \left( \frac{\partial}{\partial x}-I\frac{\partial}{\partial y} \right) f_I (x+Iy)$$
at each point $x+Iy \in \Omega_I$. On the other hand, $R_{q_0}f(\bar q_0)$ equals  $\frac{\partial_s f(q_0)}{y_0}$, where 
$\partial_s f$ denotes the \emph{spherical derivative} of $f$, defined in \cite{perotti} setting
$$\partial_s f (q_0) = \frac{1}{2}  Im(q_0)^{-1} (f(q_0)-f(\bar q_0)).$$

We conclude by looking at the implications of our result in complex coordinates. In the hypotheses of the previous theorem, let us choose $I \in \s$ so that $q_0 \in L_I$, choose $J \in \s$ with $I \perp J$ and set $e_0=1, e_1 = I, e_2 = J, e_3 = IJ$. Then $\hh = (\rr+I\rr)+(\rr+I\rr)J$ can be identified with $\cc^2$ setting $z_1 = x_0+Ix_1, z_2 = x_2+Ix_3, \bar z_1 = x_0-Ix_1, \bar z_2 = x_2-Ix_3$; we may as well split $f = f_1 + f_2 J$ for some $f_1,f_2: \Omega \to L_I$.

\begin{theorem}
Let $\Omega$ be a symmetric slice domain, let $f : \Omega \to \hh$ be a regular function and let $q_0 \in \Omega$. Chosen $I,J \in \s$ so that $q_0 \in L_I$ and $I \perp J$, let $z_1,z_2,\bar z_1, \bar z_2$ be the induced coordinates and let $\partial_1,\partial_2,\bar \partial_1, \bar \partial_2$ be the corresponding derivations. Then
\begin{equation}\label{complexholomorphy}
\left.\left( \begin{array}{cc}
\bar\partial_1 f_1 & \bar \partial_2 f_1\\
\bar \partial_1 f_2 & \bar \partial_2 f_2
\end{array}\right)\right|_{q_0} =
\left( \begin{array}{cc}
0 & 0 \\
0 & 0
\end{array}\right).
\end{equation}
Furthermore, if $R_{q_0}f$ splits as $R_{q_0}f = R_1+R_2J$ with $R_1,R_2$ ranging in $L_I$ then
\begin{equation}\label{complexjacobian}
\left.\left( \begin{array}{cc}
\partial_1 f_1 & \partial_2 f_1 \\
\partial_1 f_2 & \partial_2 f_2
\end{array}\right)\right|_{q_0} =
\left( \begin{array}{cr}
R_1(q_0) & - \overline{R_2(\bar q_0)} \\
R_2(q_0) & \overline{R_1(\bar q_0)}
\end{array}\right)
\end{equation}
\end{theorem}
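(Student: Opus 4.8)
The plan is to read off the entire real Jacobian of $f$ at $q_0$ from the directional-derivative formula \eqref{differential} of the previous theorem, applied to the basis $e_0=1$, $e_1=I$, $e_2=J$, $e_3=IJ$, and then to repackage the four resulting partials into the complex derivations $\partial_1,\bar\partial_1,\partial_2,\bar\partial_2$. Since a regular $f$ on a symmetric slice domain is $C^\infty$, it is real-differentiable at $q_0$, so
$$\frac{\partial f}{\partial x_i}(q_0) = e_i R_{q_0}f(\bar q_0) + (q_0 e_i - e_i \bar q_0) R_{\bar q_0}R_{q_0}f(q_0)$$
supplies all four partials. Writing $A_1 = R_{q_0}f(\bar q_0)$ and $A_2 = R_{\bar q_0}R_{q_0}f(q_0)$, the only algebraic inputs needed are the four coefficients $q_0 e_i - e_i\bar q_0$ and the identity $Jw=\bar w J$, valid for every $w\in L_I$ because $J\perp I$; the latter is what lets me split a product by $J$ back into $\hh = L_I \oplus L_I J$.

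First I would compute $q_0 e_i - e_i\bar q_0$. From $q_0 = x_0+Iy_0$ one gets $q_0-\bar q_0 = 2Iy_0$ and $q_0 I - I\bar q_0 = -2y_0$, whereas both $q_0 J - J\bar q_0$ and $q_0(IJ)-(IJ)\bar q_0$ vanish — reflecting that $J$ and $IJ$ are tangent to $x_0+y_0\s$ at $q_0$, so only the $A_1$-term survives in those directions. Substituting gives
$$\frac{\partial f}{\partial x_0} = A_1 + 2Iy_0 A_2,\quad \frac{\partial f}{\partial x_1} = IA_1 - 2y_0 A_2,\quad \frac{\partial f}{\partial x_2} = J A_1,\quad \frac{\partial f}{\partial x_3} = (IJ)A_1.$$
Multiplying the expressions directly by $I$ shows $\partial f/\partial x_1 = I\,\partial f/\partial x_0$ and $\partial f/\partial x_3 = I\,\partial f/\partial x_2$, which are exactly the Cauchy–Riemann relations in the coordinates $z_1 = x_0+Ix_1$ and $z_2 = x_2+Ix_3$. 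Since the scalar derivations act componentwise on $f=f_1+f_2J$ (here $I$ commutes with the $L_I$-valued $f_1,f_2$, so $\partial_k f = \partial_k f_1 + (\partial_k f_2)J$), feeding these into $\bar\partial_1 = \tfrac12(\partial_{x_0}+I\partial_{x_1})$ and $\bar\partial_2 = \tfrac12(\partial_{x_2}+I\partial_{x_3})$ makes $\bar\partial_1 f$ and $\bar\partial_2 f$ vanish, yielding the first, all-zero matrix.

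For the second matrix I note that $\partial_1 f = \tfrac12(\partial_{x_0}-I\partial_{x_1})f = \partial_{x_0}f$. To express this in terms of $R_{q_0}f$ I invoke the relation $R_{q_0}f(q_0) = A_1 + 2Iy_0 A_2$, obtained by evaluating $R_{q_0}f(q) = R_{q_0}f(\bar q_0) + (q-\bar q_0)*R_{\bar q_0}R_{q_0}f(q)$ at $q=q_0$, where the $*$-product collapses to an ordinary product since $q_0\in L_I$ commutes with $q_0-\bar q_0$. Thus $\partial_1 f = R_{q_0}f(q_0) = R_1(q_0)+R_2(q_0)J$, whose components give the first column $R_1(q_0),R_2(q_0)$. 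Likewise $\partial_2 f = \partial_{x_2}f = JA_1 = J\bigl(R_1(\bar q_0)+R_2(\bar q_0)J\bigr)$, and $Jw=\bar wJ$ together with $J^2=-1$ turns this into $-\overline{R_2(\bar q_0)} + \overline{R_1(\bar q_0)}\,J$, giving the second column $-\overline{R_2(\bar q_0)},\overline{R_1(\bar q_0)}$.

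The computation is essentially forced once the basis is fixed; the only places demanding care are bookkeeping the order of quaternionic multiplication when splitting into $L_I\oplus L_I J$ (where $Jw=\bar wJ$ is indispensable) and checking that the two tangential coefficients $q_0 J-J\bar q_0$ and $q_0(IJ)-(IJ)\bar q_0$ genuinely vanish. I would expect the main point — conceptual rather than technical — to be the recognition that the transverse direction $z_2$, a priori unrelated to the slice holomorphy of $f$, still satisfies $\bar\partial_2 f = 0$; this is precisely the content of $\partial f/\partial x_3 = I\,\partial f/\partial x_2$, and it is why the full $\cc^2$-Jacobian comes out complex-linear.
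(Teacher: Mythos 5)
Your proposal is correct and follows essentially the same route as the paper: both apply the directional-derivative formula to the basis $1, I, J, IJ$, use the commutation facts ($q_0e_i - e_i\bar q_0 = 0$ for the tangential directions $J, IJ$, and the evaluation identity giving $R_{q_0}f(q_0) = A_1 + (q_0-\bar q_0)A_2$ for directions in $L_I$), and then split via $Jz=\bar zJ$ to read off the two matrices. Your reorganization through the explicit Cauchy--Riemann relations $\partial_{x_1}f = I\,\partial_{x_0}f$ and $\partial_{x_3}f = I\,\partial_{x_2}f$ is only a cosmetic difference from the paper's ``direct computation.''
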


\begin{proof}
Setting $e_0=1, e_1 = I, e_2 = J, e_3 = IJ$ and denoting $x_0,x_1,x_2,x_3$ the corresponding real coordinates, the derivations $\partial_1,\partial_2,\bar \partial_1, \bar \partial_2$ are defined by
$$\partial_1 =\frac{1}{2}\left(\frac{\partial}{\partial x_0} - I \frac{\partial}{\partial x_1}\right),\ \bar \partial_1 =\frac{1}{2}\left(\frac{\partial}{\partial x_0} + I \frac{\partial}{\partial x_1}\right)$$
$$\partial_2 =\frac{1}{2}\left(\frac{\partial}{\partial x_2} - I \frac{\partial}{\partial x_3}\right), \bar \partial_2 =\frac{1}{2}\left(\frac{\partial}{\partial x_2} + I \frac{\partial}{\partial x_3}\right)$$
By means of the previous proposition, we compute 
$$\frac{\partial f}{\partial x_0}(q_0) = R_{q_0}f(q_0) = R_1(q_0)+R_2(q_0)J,$$
$$\frac{\partial f}{\partial x_1}(q_0) = IR_{q_0}f(q_0) = IR_1(q_0)+IR_2(q_0)J,$$
$$\frac{\partial f}{\partial x_2}(q_0) = JR_{q_0}f(\bar q_0) + (q_0J - J\bar q_0)R_{\bar q_0}R_{q_0}f(q_0) = JR_{q_0}f(\bar q_0) =$$
$$= J (R_1(\bar q_0)+R_2(\bar q_0)J) = - \overline{R_2(\bar q_0)} + \overline{R_1(\bar q_0)}J,$$
$$\frac{\partial f}{\partial x_3}(q_0) = KR_{q_0}f(\bar q_0) + (q_0K - K\bar q_0)R_{\bar q_0}R_{q_0}f(q_0) = K R_{q_0}f(\bar q_0) =$$
$$= I J (R_1(\bar q_0)+R_2(\bar q_0)J) = - I\overline{R_2(\bar q_0)} + I\overline{R_1(\bar q_0)}J,$$
where we have taken into account that $Jz = \bar z J$ for all $z \in L_I$. The thesis follows by direct computation.
\end{proof}

Equation \eqref{complexholomorphy} tells us that fixing $q_0$ and choosing appropriate complex coordinates on $\hh$, $f$ is complex differentiable at $q_0$; equation \eqref{complexjacobian} then allows to compute the complex Jacobian of $f$ at $q_0$. These tools open the possibility of using complex variables in the study of slice regular functions.
We also point out that in the special case where $q_0 \in \rr$ the complex Jacobian has the form 
$$\left(\begin{array}{cr}
a & -\bar b \\
b & \bar a
\end{array}\right),$$ 
which implies $\lim_{h \to 0} h^{-1}[f(q_0+h)-f(q_0)] = a+bJ$ and proves what follows.

\begin{corollary}
Let $\Omega$ be a symmetric slice domain. If $f : \Omega \to \hh$ is a regular function then it is left $q$-differentiable at each $q_0 \in \Omega \cap \rr$, where  $\lim_{h \to 0} h^{-1}[f(q_0+h)-f(q_0)] =R_{q_0}f(q_0) = \partial_c f(q_0)$.
\end{corollary}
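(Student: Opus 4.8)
The plan is to deduce the statement from the directional-derivative formula \eqref{differential}, exploiting the fact that a real point $q_0$ is central in $\hh$. First I would record that, since $q_0 \in \rr$, we have $\bar q_0 = q_0$, so the coefficient $q_0 v - v\bar q_0 = q_0 v - v q_0$ vanishes for every $v \in \hh$; moreover $A_1 = R_{q_0}f(\bar q_0) = R_{q_0}f(q_0)$. Substituting into \eqref{differential}, the derivative of $f$ along any unit vector $v$ collapses to
$$\lim_{t \to 0} \frac{f(q_0 + tv) - f(q_0)}{t} = v\, R_{q_0}f(q_0),$$
that is, the real differential of $f$ at $q_0$ is the right multiplication $h \mapsto h\,c$ by the fixed quaternion $c := R_{q_0}f(q_0)$.

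Next I would upgrade this statement about directional derivatives to genuine left $q$-differentiability. Because $f$ is regular on a symmetric slice domain it is real-analytic (indeed the local expansion \eqref{expansion2} exhibits $f$ near $q_0$ as a convergent series of polynomials), so $f$ is real-differentiable at $q_0$ and $f(q_0 + h) - f(q_0) = h\,c + o(|h|)$. Multiplying on the left by $h^{-1}$ and using $|h^{-1}| = |h|^{-1}$, the remainder $h^{-1}o(|h|)$ has modulus $|h|^{-1}o(|h|) \to 0$, whence
$$\lim_{h \to 0} h^{-1}\left[ f(q_0 + h) - f(q_0) \right] = c = R_{q_0}f(q_0).$$
The final identification $R_{q_0}f(q_0) = \partial_c f(q_0)$ is precisely the one already recorded in the remark preceding the corollary.

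Alternatively, the conclusion can be read straight off the complex Jacobian \eqref{complexjacobian}: for $q_0 \in \rr$ one has $\bar q_0 = q_0$, so the Jacobian takes the form $\left(\begin{smallmatrix} a & -\bar b \\ b & \bar a \end{smallmatrix}\right)$ with $a = R_1(q_0)$ and $b = R_2(q_0)$, which is exactly the matrix of right multiplication by $a + bJ = R_{q_0}f(q_0)$ in the chosen complex coordinates. Combined with the holomorphy relation \eqref{complexholomorphy}, this again forces the differential to be $h \mapsto h\,R_{q_0}f(q_0)$ and yields the same limit.

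The only point that is not purely formal is the passage from directional derivatives to the two-sided limit: one must know \emph{a priori} that $f$ is real-differentiable (not merely G\^ateaux differentiable along each direction), so that the remainder $o(|h|)$ is uniform in the direction of $h$. I expect this to be the main obstacle in a fully rigorous write-up, but it is already guaranteed by the real-analyticity of regular functions on symmetric slice domains, or directly by the convergent expansion \eqref{expansion2}.
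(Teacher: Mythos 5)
Your proposal is correct, and your primary argument takes a slightly different (more direct) route than the paper. The paper deduces the corollary from the complex-coordinates theorem: at $q_0 \in \rr$ the complex Jacobian \eqref{complexjacobian} specializes to $\left(\begin{smallmatrix} a & -\bar b \\ b & \bar a\end{smallmatrix}\right)$ with $a = R_1(q_0)$, $b = R_2(q_0)$, which together with \eqref{complexholomorphy} identifies the differential with right multiplication by $a+bJ = R_{q_0}f(q_0)$ --- this is exactly your ``alternative'' paragraph. Your main argument instead works one level earlier, directly from the directional-derivative formula \eqref{differential}: centrality of real quaternions kills the term $(q_0v - v\bar q_0)A_2$, leaving $df_{q_0}(v) = v\,R_{q_0}f(q_0)$, and real differentiability (from $f \in C^{\omega}(\Omega)$, or from the locally uniformly convergent expansion \eqref{expansion2}) upgrades this to the two-sided limit. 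What your route buys is economy --- it bypasses the choice of complex structure and the splitting $f = f_1 + f_2J$ entirely; what the paper's route buys is that the corollary falls out as a one-line remark from a theorem it wanted anyway. Your explicit attention to the G\^ateaux-versus-Fr\'echet issue is a point in your favor: the paper leaves the same step implicit (its Jacobian formula also only records partial derivatives), relying silently on the known smoothness of regular functions on symmetric slice domains, so your write-up is if anything the more careful of the two.
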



\bibliography{Expansion}

\begin{thebibliography}{10}

\bibitem{librosommen}
F.~Brackx, R.~Delanghe, and F.~Sommen.
\newblock {\em Clifford analysis}, volume~76 of {\em Research Notes in
  Mathematics}.
\newblock Pitman (Advanced Publishing Program), Boston, MA, 1982.

\bibitem{cauchy}
F.~Colombo, G.~Gentili, and I.~Sabadini.
\newblock A {C}auchy kernel for slice regular functions.
\newblock {\em Ann. Global Anal. Geom.}, 37(4):361--378, 2010.

\bibitem{advancesrevised}
F.~Colombo, G.~Gentili, I.~Sabadini, and D.~Struppa.
\newblock Extension results for slice regular functions of a quaternionic
  variable.
\newblock {\em Adv. Math.}, 222:1793--1808, 2009.

\bibitem{librodaniele}
F.~Colombo, I.~Sabadini, F.~Sommen, and D.~C. Struppa.
\newblock {\em Analysis of {D}irac systems and computational algebra},
  volume~39 of {\em Progress in Mathematical Physics}.
\newblock Birkh\"auser Boston Inc., Boston, MA, 2004.

\bibitem{fueter1}
R.~Fueter.
\newblock Die {F}unktionentheorie der {D}ifferentialgleichungen {$\Delta u=0$}
  und {$\Delta\Delta u=0$} mit vier reellen {V}ariablen.
\newblock {\em Comment. Math. Helv.}, 7(1):307--330, 1934.

\bibitem{fueter2}
R.~Fueter.
\newblock \"{U}ber die analytische {D}arstellung der regul\"aren {F}unktionen
  einer {Q}uaternionenvariablen.
\newblock {\em Comment. Math. Helv.}, 8(1):371--378, 1935.

\bibitem{zeros}
G.~Gentili and C.~Stoppato.
\newblock Zeros of regular functions and polynomials of a quaternionic
  variable.
\newblock {\em Michigan Math. J.}, 56(3):655--667, 2008.

\bibitem{powerseries}
G.~Gentili and C.~Stoppato.
\newblock Power series and analyticity over the quaternions.
\newblock {\em Math. Ann.}, online first, 2011.

\bibitem{zerosopen}
G.~Gentili and C.~Stoppato.
\newblock The zero sets of slice regular functions and the open mapping
  theorem.
\newblock In I.~Sabadini and F.~Sommen, editors, {\em Hypercomplex analysis and
  applications}, Trends in Mathematics, pages 95--107. Birkh\"auser Verlag,
  Basel, 2011.

\bibitem{cras}
G.~Gentili and D.~C. Struppa.
\newblock A new approach to {C}ullen-regular functions of a quaternionic
  variable.
\newblock {\em C. R. Math. Acad. Sci. Paris}, 342(10):741--744, 2006.

\bibitem{advances}
G.~Gentili and D.~C. Struppa.
\newblock A new theory of regular functions of a quaternionic variable.
\newblock {\em Adv. Math.}, 216(1):279--301, 2007.

\bibitem{milan}
G.~Gentili and D.~C. Struppa.
\newblock On the multiplicity of zeroes of polynomials with quaternionic
  coefficients.
\newblock {\em Milan J. Math.}, 76:15--25, 2008.

\bibitem{perotti}
R.~Ghiloni and A.~Perotti.
\newblock Slice regular functions on real alternative algebras.
\newblock {\em Adv. Math.}, 226(2):1662--1691, 2011.

\bibitem{ghiloni}
R.~Ghiloni and A.~Perotti.
\newblock Zeros of regular functions of quaternionic and octonionic variable: a
  division lemma and the camshaft effect.
\newblock {\em Ann. Mat. Pura Appl. (4)}, to appear.

\bibitem{libroshapiro}
V.~V. Kravchenko and M.~V. Shapiro.
\newblock {\em Integral representations for spatial models of mathematical
  physics}, volume 351 of {\em Pitman Research Notes in Mathematics Series}.
\newblock Longman, Harlow, 1996.

\bibitem{laville2}
G.~Laville and E.~Lehman.
\newblock Analytic {C}liffordian functions.
\newblock {\em Ann. Acad. Sci. Fenn. Math.}, 29(2):251--268, 2004.

\bibitem{shapiro}
A.~Pogorui and M.~Shapiro.
\newblock On the structure of the set of zeros of quaternionic polynomials.
\newblock {\em Complex Var. Theory Appl.}, 49(6):379--389, 2004.

\bibitem{singularities}
C.~Stoppato.
\newblock Singularities of slice regular functions.
\newblock {\em Math. Nachr.}, to appear.

\bibitem{sudbery}
A.~Sudbery.
\newblock Quaternionic analysis.
\newblock {\em Math. Proc. Cambridge Philos. Soc.}, 85(2):199--224, 1979.

\end{thebibliography}

\bibliographystyle{abbrv}


\end{document}